\documentclass[11pt]{amsart}

\usepackage{cancel}
\usepackage{latexsym, amssymb, amsmath}
\usepackage{soul,esint}
\usepackage{amsfonts, graphicx}
\usepackage{graphicx,color}
\usepackage{mathtools}
\usepackage{hyperref}
\usepackage{verbatim}
\usepackage{faktor}
\usepackage{xfrac}
\makeatletter
\renewcommand*{\eqref}[1]{%
  \hyperref[{#1}]{\textup{\tagform@{\ref*{#1}}}}%
}
\makeatother

\newcommand{\be}{\begin{equation}}
\newcommand{\ee}{\end{equation}}
\newcommand{\beq}{\begin{eqnarray}}
\newcommand{\eeq}{\end{eqnarray}}

\newcommand{\dist}{\mathrm{dist}}

\newtheorem{thm}{Theorem}[section]

\newtheorem{prop}{Proposition}[section]
\newtheorem{cor}{Corollary}[section]
\newtheorem{defn}{Definition}[section]
\theoremstyle{remark}
\newtheorem{rem}{Remark}[section]
\numberwithin{equation}{section}

\def\be{\begin{equation}}
\def\ee{\end{equation}}
\def\bee{\begin{equation*}}
\def\eee{\end{equation*}}

\def\lf{\left}
\def\ri{\right}

\def\p{\partial}

\def\tr{\operatorname{tr}}

\def\a{{\alpha}}
\def\b{{\beta}}

\def\R{\mathbb{R}}

\def\d{\frac {\operatorname d}{\operatorname {dt}}}

\def\S{\Sigma}

\def\Lp{\Delta}
\def\Na{\nabla}
\def\eps{\varepsilon}

\def\d{\delta}

\def\MS{\mathcal{S}}

\def\m{\mathfrak{m}}

\def\l{\lambda}
\def\mE{\mathcal{E}}

\def\mE{\mathcal{E}}

\def\HH{\mathbb{H}}
\def\h {\hat}
\def\l {\lambda}
\def\s {\sigma}

\def\mN{\mathcal{N}}
\def\gH{g_{\HH}}
\def\mS{\mathbb{S}}
\def\tg{\tilde{g}}
\def\nS{\mathbb{S}}
\def\MS{\mathcal{S}}

\begin{document} 
\title[AH PMT AF Shield with arbitrary ends]
{Positive mass theorem for initial data sets with arbitrary ends}

\author{Tin-Yau Tsang}
\address [Tin-Yau Tsang] {Department of Mathematics, University of British Columbia, Vancouver, BC, V6T 1Z2}
\email{tytsang@math.ubc.ca}

\begin{abstract}
We showed a positive energy theorem for asymptotically flat initial data sets with the concept of spectral PSC by \cite{HSY, BHHSZ, BW1} and the Jang equation \cite{SY2, E3,Jang}.  Then, we proved a quantitative shielding theorem concerning the causal property of the energy-momentum vector of an asymptotically hyperbolic manifold. As a result, we established the positive mass theorem for complete asymptotically hyperbolic manifolds satisfying the dominant energy condition. As corollaries, we also obtained corresponding results for manifolds with asymptotically locally hyperbolic ends with a certain symmetry.  
\end{abstract}

\maketitle

\section{Introduction}

For general relativity, it is a fascinating question to seek global invariants to  indicate energy for a spacelike slice. Arnowitt, Deser and Misner defined the ADM mass in \cite{ADM}. The positive mass theorem proved by Schoen and Yau \cite{SY1} showed that asymptotically flat manifolds with non-negative scalar curvature must have non-negative ADM mass. Furthermore, they showed that Euclidean space is characterised by vanishing ADM mass. Witten (\cite{W}, cf. \cite{PT}) gave a proof on spin manifolds. 

\

For another standard model arising from Einstein's theory with negative cosmological constant, the hyperbolic space, Min-Oo \cite{MO} under certain strong assumptions has shown a scalar curvature rigidity theorem, and this is furthered by Andersson and Dahl \cite{AD}. Wang found a geometric invariant vector for energy-momentum \cite{XW} under a strong asymptotics which we referred to as Wang's asymptotics. It is then generalised by Chruściel and Herzlich \cite{CH03}. Corresponding to the positive mass theorem aforementioned, they \cite{XW, CH03} showed that the energy momentum vector for a spin asymptotically hyperbolic manifold with scalar curvature bounded from below by $-n(n-1)$ must be future-pointing timelike or vanishes. Moreover, the hyperbolic space is the only such with the vanishing energy-momentum vector. 

\

There are then different approaches for manifolds not necessarily spin. While assuming the sign of the mass aspect function, this was shown by Andersson, Cai and Galloway \cite{ACG} by patching with the hyperbolic space. Chruściel, Galloway, Nguyen and Paetz \cite{CGNP18} ruled out the past-pointing timelike case by approximating the metric by those with constant mass aspect function and Chruściel and Delay \cite{CD19} eliminated the spacelike case by the Maskit gluing. The Jang equation \cite{SY2} approach is adopted by Sakovich \cite{S} and Lundberg \cite{L}. On the other hand, a more general rigidity result is established by Huang, Jang and Martin \cite{HJM}. 

\

Motivated by the Liouville Theorem {\cite{SY4}, it is conjectured (\cite{SY4,SY5}) that for a complete manifold with non-negative scalar curvature and an asymptotically flat end $\mE$, the ADM mass of $\mE$ is non-negative. It is first proved by Lesourd, Unger and Yau in \cite{LUY21} with the assumption that $\mE$ is asymptotically Schwarzschild. This is generalised by Zhu {\cite{Zhu}}, Chen, Liu, Shi and Zhu {\cite{CLSZ} and Lee, Lesourd and Unger\cite{LLU22}. While for spin manifolds, this was done by Bartnik and Chruściel \cite{BC} and Cecchini and Ziedler \cite{CZ}. 

\

A natural question is whether the causal property of the energy momentum vector behaves similarly of an asymptotically hyperbolic end of a complete manifold with scalar curvature bounded from below by $-n(n-1)$. Chai and Wan \cite{CW} gave an affirmative answer for spin manifolds. 

\

In this paper, we are going to establish without the spin assumption the positive mass theorem for asymptotically hyperbolic manifolds with arbitrary ends. We first proved a quantitative shielding theorem by considering the Maskit gluing (\cite{CD19}), patching with the hyperbolic space (\cite{ACG}) and the positive energy theorem for asymptotically flat initial data sets (Theorem \ref{PET1},  c.f. \cite{SY2, E3},  the spacetime positive mass theorem with boundary \cite{GL, LLU21}) in particular its rigidity statement (\cite{HSY}).   

\begin{thm}[Quantitative shielding theorem, cf. \cite{LLU22} Theorem 1.1]\label{QST} 
Let $(M^n,g)$, $n \ge 4$, be an asymptotically hyperbolic manifold of H\"older type $(\a,\d)$ where $\a\in(0,1)$ and $\delta>\frac{n}{2}$. Let $U_0$, $U_1$, and $U_2$ be neighborhoods of an asymptotically hyperbolic end $\mathcal E$ such that  $\overline{U_2}\subset U_1$, $\overline{U_1}\subset U_0$, and $\overline {U_0 \setminus \mathcal E}$ is compact,
and let 
$$D_0=\dist_g(\partial U_0, U_1)\quad\text{and}\quad
D_1 =\dist_g(U_2,\partial U_1).$$
If the following hold:
\begin{enumerate}
    \item $g$ has no points of incompleteness in $U_0$, 
    
    \item $R_g\ge -n(n-1)$ on $U_0$, and 
    
    \item \label{scalar_bound} the scalar curvature satisfies the largeness assumption 
    \begin{equation}\label{largeness}
R_g+n(n-1)>\frac{4}{D_0 D_1}\quad \text{on}\quad\overline{U_1}\setminus U_2,
    \end{equation}
\end{enumerate}
then the energy momentum vector of $\mathcal E$ is future-directed causal or vanishing. 
\end{thm}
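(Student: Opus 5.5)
We argue by contradiction and reduce to the positive energy theorem for asymptotically flat initial data sets (Theorem \ref{PET1}), using the shielding region $\overline{U_1}\setminus U_2$ where the scalar curvature is large. Suppose the energy momentum vector of $\mathcal E$ is neither future-directed causal nor zero. By the work of \cite{CGNP18, CD19}, after Maskit gluing and approximation we may perturb $g$ near infinity of $\mathcal E$ without changing it on $U_0\setminus \mathcal E$ in any essential way. The perturbed metric is exactly hyperbolic outside a large compact set in a chart where the mass would be negative, or it has constant mass aspect function of the wrong sign. The strict inequality in \eqref{largeness} and the hypothesis $R_g\ge -n(n-1)$ then survive up to an arbitrarily small error on the region of interest. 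Following \cite{ACG}, we then patch: the resulting end is replaced by a piece of hyperbolic space, or equivalently a Schwarzschild-AdS type end, whose "mass" has the wrong sign.

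Next we turn this Riemannian asymptotically hyperbolic picture into an initial data set. View $(M,g)$ with second fundamental form $k=g$ (umbilic in Minkowski-like slicing). Then the dominant energy condition $\mu\ge|J|$ is equivalent to $R_g\ge -n(n-1)$, and the hyperbolic end becomes asymptotically flat as initial data, since hyperbolic space sits in Minkowski space as the unit hyperboloid. The wrong-sign energy momentum vector translates into the ADM energy-momentum of the new asymptotically flat initial data end failing $E\ge |P|$. In the quantitative-shielding region, the surplus $R_g+n(n-1)-\frac{4}{D_0D_1}>0$ is exactly what the spectral PSC method of \cite{HSY, BHHSZ, BW1} needs. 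Working with $\mu-|J|$, we choose a cutoff $\phi$ depending on $\dist_g(\cdot,\partial U_0)$ and $\dist_g(\cdot,U_2)$, with $|\nabla\phi|$ controlled by $1/D_0$ and $1/D_1$. The operator $-\Delta+\tfrac12(\mu-|J|)$, possibly modified by the Jang-equation drift term, then has a positive first eigenfunction on the truncated region.

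The key step is to run the Jang equation \cite{SY2, E3, Jang} on $U_0$ with a blow-up or Dirichlet-type boundary condition near $\partial U_0$. Because $g$ has no incompleteness in $U_0$, we obtain a graph that is complete in the relevant region, possibly with blow-up components that are cylindrical and hence harmless. On the Jang graph, the Schoen--Yau identity gives $R_{\bar g}\ge 2(\mu-|J|)-2|X|^2+2\,\mathrm{div}X$. Combined with the spectral condition from the previous step, the conformal or warped deformation of \cite{BHHSZ, BW1} produces an asymptotically flat manifold with nonnegative scalar curvature, in the spectral sense, and with negative mass. The largeness $\frac{4}{D_0D_1}$ is used precisely to absorb the gradient of the cutoff, by the standard $\mu$-bubble / spectral width estimate in the spirit of \cite{LLU22} Theorem 1.1. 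This contradicts the rigidity part of Theorem \ref{PET1} from \cite{HSY}, and the case in which the vector vanishes is handled the same way using the strict inequality.

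The main obstacle will be the Jang equation step on an incomplete, non-compact region with boundary. We must control where blow-up occurs and show that the graph has an asymptotically flat end carrying the wrong-sign mass. We must also verify that the cutoff and drift term estimates close with the exact constant $4/(D_0D_1)$. We will first try to place the $\mu$-bubble barrier between $\partial U_0$ and $U_1$, and between $\partial U_1$ and $U_2$, using the product of the two widths as in \cite{LLU22}. The dimension restriction $n\ge4$ is expected to enter through the Maskit gluing and the mass-aspect approximation, where $\delta>\frac n2$ is needed.
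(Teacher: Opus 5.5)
Your outline has the right ingredients: contradiction, the gluing results of \cite{CD19, CGNP18, ACG}, the Jang equation, and the spectral rigidity of \cite{HSY, BW1}. However, the step that turns the hyperbolic end into an asymptotically flat one is false, and with it the mechanism of the contradiction. The data $(M,g,\pm g)$ is not asymptotically flat: the hyperboloid in Minkowski space carries the hyperbolic metric, so there is no ``new asymptotically flat end'' whose ADM vector violates $E\ge|P|$. A transfer of the AH mass to an ADM mass exists only for the Jang graph of hyperboloidal data with carefully built barriers at infinity, as in \cite{S, L}. You supply none of that, and the paper avoids it. Your final appeal to rigidity is also inconsistent with having produced negative mass. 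The vanishing case needs no argument, since the conclusion allows it.

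The paper instead \emph{spends} the wrong sign. Maskit gluing reduces the spacelike case to the past-pointing timelike one, and \cite{CGNP18} makes the mass aspect function a negative constant. Theorem \ref{ACGdeformation} then makes the end exactly hyperbolic, $\frac{dr^2}{1+r^2/a}+r^2h_0$ for $r\ge 9\rho_0$, at the price of only $R\ge-\frac{n(n-1)}{a}$ with $a<1$. Choosing $k=-g/\sqrt a$ turns DEC into $R\ge -\frac{n(n-1)}{a}$. This holds on $U_0$ and on the glued end, with the strict margin $2(\check{\mu}-|\check{J}|)=R_g+\frac{n(n-1)}{a}>R_g+n(n-1)$ on $U_0$. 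Because the far region is now literally a piece of the hyperboloid $t=\sqrt{a+|x|^2}$ in Minkowski space, the slice can be bent beyond $\rho=10\rho_0$ into a $t=\mathrm{const}$ slice. This gives an asymptotically flat initial data set with an exactly Euclidean end, so $E=|P|=0$. The contradiction is then with rigidity: DEC holds strictly on $\overline{U_1}\setminus U_2$, which forces $E>0$.

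The shielding step is also missing its key idea. A cutoff with a positive first eigenfunction on a truncated region does not explain how the Jang equation avoids the part of $M$ beyond $\partial U_0$, where neither DEC nor completeness is available. The paper modifies the data to $\hat k=\check{k}-\frac{h}{n-1}\check{g}$, with $h\ge0$, $h=0$ on $U_2$, and $h\to\infty$ at $\partial M_0\subset U_0$. This $h$ is built as in \cite{LLU22} Section 6 so that $\frac{n}{n-1}h^2-2|\nabla h|>-\frac{4}{D_0D_1}(1-\gamma)$ on $\overline{U_1}\setminus U_2$, and $\frac{n}{n-1}h^2-2|\nabla h|\ge 0$ on the rest of $\{h<\infty\}$. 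By Proposition \ref{newDEC}, $\hat\mu=\check{\mu}+\frac{n}{2(n-1)}h^2-h\,\mathrm{tr}\,\check{k}$ and $\hat J=\check{J}+dh$. Three things then happen:
\begin{enumerate}
\item the largeness assumption \eqref{largeness} absorbs $|\nabla h|$;
\item the sign $\mathrm{tr}\,\check{k}<0$ makes $-h\,\mathrm{tr}\,\check{k}\ge 0$, which is why $k=-g/\sqrt a$ is paired with $h\ge 0$;
\item $\theta^+=H+\mathrm{tr}_S\check{k}-h<0$ near $\partial M_0$, so the boundary is strictly outer trapped.
\end{enumerate}
This trapped boundary is what forces the Jang solution of \cite{E10, E3} to blow up there. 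What remains is a complete graph with the Euclidean end and $E=0$, whose scalar curvature is bounded below in the strong spectral sense by $2(\hat\mu-|\hat J|)\ge 0$, positive somewhere. \cite{HSY} Theorem 1.8, extended to all dimensions via \cite{LLU22, BW1}, applies to it.
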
 

There have been recent advancements of Riemannian positive mass theorem with arbitrary ends dealing with singularities by blowing up with suitable functions (He-Shi-Yu \cite{HSY}, Bi-Hao-He-Shi-Zhu \cite{BHHSZ} and Brendle-Wang \cite{BW1}) to carry out the dimension induction coupled with torical symmetrisation (\cite{FCS80}) in Schoen-Yau \cite{SY7}. By identifying the Jang graph (\cite{SY2, E3}, cf.\cite{Jang}) as a complete asymptotically flat manifold with positive scalar curvature in spectral sense, one can obtain the following. 

\begin{thm}[Quantitative shielding theorem, cf. \cite{LLU22} Theorem 1.1]\label{QST2} 
Let $(M^n,g, k)$, $n \ge 3$, be an asymptotically flat initial data set of Sobolev type.  Let $U_0$, $U_1$, and $U_2$ be neighborhoods of an asymptotically flat end $\mathcal E$ such that  $\overline{U_2}\subset U_1$, $\overline{U_1}\subset U_0$, and $\overline {U_0 \setminus \mathcal E}$ is compact,
and let 
$$D_0=\dist_g(\partial U_0, U_1)\quad\text{and}\quad
D_1 =\dist_g(U_2,\partial U_1).$$
If the following hold:
\begin{enumerate}
    \item $g$ has no points of incompleteness in $U_0$, 
    
    \item $tr_g k \leq 0$ on $U_0\setminus \overline {U_2}$    
   
    \item $\mu\geq |J|_g$ on $U_0$, and 
    
    \item \label{energyscalar_bound} the energy satisfies the largeness assumption 
    \begin{equation}\label{energylargeness}
\mu-|J_g|>\frac{4}{D_0 D_1}\quad \text{on}\quad\overline{U_1}\setminus U_2,
    \end{equation} 
\end{enumerate}
then the ADM energy $E$ is non-negative.  
\end{thm}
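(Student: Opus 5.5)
The plan is to reduce Theorem \ref{QST2} to a Riemannian statement on the Jang graph, and then run the spectral-PSC shielding argument of \cite{LLU22} and \cite{HSY, BHHSZ, BW1} on that graph.

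\textbf{Step 1: the Jang equation on a region.} Argue by contradiction and suppose $E<0$. On $U_0$, solve the Jang equation $H_{\Gamma_f}-\tr_{\Gamma_f}k=0$ for the graph $\Gamma_f\subset U_0\times\R$, following \cite{SY2, E3}. I would solve a Dirichlet-type problem on an exhaustion of $U_0$, imposing the asymptotics $f=O(|x|^{1-q})$ at $\mathcal E$ (possible since the data are asymptotically flat of Sobolev type). Boundary behaviour near $\partial U_0$ is controlled by barriers, or one lets $f$ blow up there. The hypothesis $\tr_g k\le 0$ on $U_0\setminus\overline{U_2}$ is used exactly here. It gives the correct sign for $\pm\infty$ blow-up barriers near $\partial U_0$, or at least ensures that any blow-up occurs only over a region where the limiting cylinders $\Omega\times\R$ are harmless. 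The outcome is a complete Riemannian manifold $(\hat M,\hat g)$, possibly with cylindrical ends. Its main end is asymptotically flat with ADM mass equal to $E$, as in Schoen--Yau.

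\textbf{Step 2: spectral scalar curvature on the graph.} The Schoen--Yau identity gives
\[
R_{\hat g}=2(\mu-J(w))+|h-k|^2+2|q|^2-2\,\mathrm{div}_{\hat g}q .
\]
Since $|w|\le 1$, we have $\mu-J(w)\ge\mu-|J|_g\ge0$. The divergence term means $R_{\hat g}$ is only nonnegative in the spectral sense. Concretely, the operator $-\Delta_{\hat g}+\tfrac12 R_{\hat g}$ dominates $-\Delta_{\hat g}+(\mu-|J|)$, by absorbing $q$ through Cauchy--Schwarz. Thus $(\hat M,\hat g)$ carries spectral PSC with density $\mu-|J|$, which is bounded below by $4/(D_0D_1)$ on the lift of $\overline{U_1}\setminus U_2$. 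Distances on $\hat M$ are at least those on $M$, since $\hat g=g+df^2$. Hence the distances $D_0,D_1$ measured in the graph only get larger, and the largeness assumption \eqref{energylargeness} persists on $\hat M$.

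\textbf{Step 3: Riemannian shielding.} Apply the Riemannian quantitative shielding argument in its spectral form, as in \cite{LLU22}, modified à la \cite{HSY, BHHSZ, BW1}. The steps are as follows.
\begin{itemize}
\item Perform a conformal change, or a warped $\mu$-bubble, built from a distance-type function $\rho$ with $|\nabla\rho|\le1$ in the shell.
\item The quantity $4/(D_0D_1)$ is exactly what makes a $\mu$-bubble with prescribed-mean-curvature weight $h(\rho)$ exist between $\partial U_1$ and $U_2$, with the correct sign in the stability inequality.
\item This cuts off everything outside $U_1$ and produces a compact-boundary region containing $\mathcal E$. It carries spectral PSC and has a boundary whose mean curvature has a favourable sign.
\item Use the assumption $E<0$ to perturb to strictly spectral-positive data with negative mass. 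Then apply the positive mass theorem with boundary (\cite{GL, LLU21}, Theorem \ref{PET1}) in its spectral version to reach a contradiction.
\end{itemize}

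\textbf{Main obstacle.} I expect the main difficulty to be Step 1 combined with the spectral character of Step 2. The Jang solution on a region that is neither compact nor complete must be controlled near $\partial U_0$. Moreover, the $\mu$-bubble and dimension-reduction argument must be run with only spectral, rather than pointwise, scalar curvature bounds, including on possible blow-up cylinders. My first attempt would be to work with a warped functional $\int_\Sigma u\,dA-\int h\,u$, where $u$ is the first eigenfunction of $-\Delta+(\mu-|J|)$, in the style of \cite{BW1}. Otherwise I would rely directly on Theorem \ref{PET1} as stated earlier, which already packages the spectral positive energy theorem.
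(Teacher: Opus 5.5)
Your proposal has a genuine gap, located exactly where you invoke hypothesis (2). In Step 1 you claim that $\operatorname{tr}_g k\le 0$ on $U_0\setminus\overline{U_2}$ supplies $\pm\infty$ blow-up barriers for the Jang equation near $\partial U_0$. It does not. Barriers for Jang's equation are trapped or untrapped surfaces, i.e.\ sign conditions on $\theta_\pm=H\pm\operatorname{tr}_S k$. Both $H$ and $\operatorname{tr}_S k=\operatorname{tr}_g k-k(\nu,\nu)$ are left completely unconstrained by $\operatorname{tr}_g k\le 0$. Since $\partial U_0$ is arbitrary and $g$ may be incomplete just beyond it, you cannot solve or control the (regularised) Jang equation there. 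The resulting graph is therefore not complete, and its behaviour over $\partial U_0$ is unknown.

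Step 3 then needs two tools that are not available, as you yourself flag. The first is a spectral version of the Riemannian shielding theorem: a $\mu$-bubble in the shell using only spectral positivity, on a manifold with possible cylindrical ends and an uncontrolled boundary. The second is a spectral positive mass theorem with boundary. Step 2 also overstates what the Jang identity gives. Absorbing $-2\operatorname{div}q$ by Cauchy--Schwarz consumes the whole Dirichlet term, so you only get $\int|\nabla\phi|^2+\frac12R_{\hat g}\phi^2\ge\int(\mu-|J|)\phi^2$, not domination of $-\Delta+(\mu-|J|)$. Note also that in your scheme hypothesis (2) does no real work, which signals that the key mechanism is missing.

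The missing idea is to do the shielding on the initial data \emph{before} the Jang reduction. Take $h\ge 0$ from \cite{LLU22} Section 6 with the following properties:
\begin{itemize}
\item $h=0$ on $U_2$, and $h\to\infty$ at $\partial M_0$, where $M_0=\overline{\{h<\infty\}}\subset U_0$;
\item $\frac{n}{n-1}h^2-2|\nabla h|>-\frac{4}{D_0D_1}(1-\gamma)$ on $\overline{U_1}\setminus U_2$, and $\frac{n}{n-1}h^2-2|\nabla h|\ge 0$ elsewhere.
\end{itemize}
Set $\hat k=k-\frac{h}{n-1}g$. By Proposition \ref{newDEC}, $\hat\mu=\mu+\frac{n}{2(n-1)}h^2-h\operatorname{tr}_g k$ and $\hat J=J+dh$. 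This is where $\operatorname{tr}_g k\le 0$ is really used: it makes $-h\operatorname{tr}_g k\ge 0$. Combined with the largeness assumption, $(M_0,g,\hat k)$ then satisfies the dominant energy condition. Moreover $\theta_+=H+\operatorname{tr}_S k-h<0$ near $\partial M_0$. So the incompleteness has been converted into a strictly outer trapped boundary, which is precisely the barrier the Jang equation needs. After perturbing to harmonic asymptotics and strict dominant energy condition, the Jang equation with blow-up yields a complete asymptotically flat graph. It has the same energy and positive scalar curvature in the strong spectral sense, and \cite{BW1} Theorem 1.4 gives $E\ge 0$ directly. No spectral shielding on the graph is required.
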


As a corollary,  with the density theorems in \cite{EHLS}, we obtain the following positive energy theorems in higher dimensions.   
\begin{thm} \label{PET1} 
Let $(M^n,g, k)$, $n \ge 3$, be an asymptotically flat initial data set of Sobolev type satisfying the dominant energy condition with only one end $\mathcal E$. 
The ADM energy $E$ is non-negative.  
\end{thm}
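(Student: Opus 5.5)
Since Theorem \ref{PET1} is stated as a corollary of Theorem \ref{QST2} together with the density theorems of \cite{EHLS}, the plan is to reduce to the situation where the hypotheses of Theorem \ref{QST2} can be checked directly. The difficulty is that Theorem \ref{QST2} needs two things that the dominant energy condition $\mu\ge|J|_g$ alone does not give: the sign condition $\operatorname{tr}_g k\le 0$ on $U_0\setminus\overline{U_2}$, and the strict largeness \eqref{energylargeness} on the annular region $\overline{U_1}\setminus U_2$. Because $M$ has only one end and is complete, we may take $U_0=M$. Then $\partial U_0=\emptyset$, so $D_0=+\infty$ and \eqref{energylargeness} reduces to the requirement $\mu-|J|_g>0$ on $\overline{U_1}\setminus U_2$.

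\textbf{Step 1 (density).} Using \cite{EHLS}, approximate $(g,k)$ in the Sobolev-type asymptotically flat topology by initial data $(\bar g,\bar k)$ with harmonic asymptotics that satisfy the \emph{strict} dominant energy condition $\bar\mu>|\bar J|_{\bar g}$ everywhere, with ADM energy $\bar E$ arbitrarily close to $E$. It then suffices to prove $\bar E\ge 0$ and pass to the limit. Strictness gives $\bar\mu-|\bar J|>0$ on any compact set, so the largeness hypothesis holds with $D_0=\infty$ for any choice of $U_1\supset\overline{U_2}$.

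\textbf{Step 2 (the trace condition).} We need $\operatorname{tr}_{\bar g}\bar k\le0$ on $M\setminus\overline{U_2}$, where $M\setminus\overline{U_2}$ is a compact set. I would choose $U_2=M\setminus\overline{K'}$ with $K'$ empty or tiny, making the region where the sign condition is required as small as possible. Ideally the region is empty: take $U_2=M$ minus a small ball $B$ and $U_1$ slightly larger, in which case we need $\operatorname{tr}\bar k\le 0$ only on $\overline B$.

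One way to arrange this is to replace $\bar k$ by $-\bar k$ if needed near a point. That changes $J$ to $-J$ and leaves $\mu$ unchanged, but it cannot be done locally. A better option is a local deformation on a small ball that decreases $\operatorname{tr} k$ while keeping $\mu-|J|$ positive. This is possible by strictness and the smallness of the ball, for example by adding $-\epsilon\phi\, \bar g$ with a bump function $\phi$, which alters $\mu$ and $|J|$ only by $O(\epsilon)$. Taking the ball to be a point where $\operatorname{tr}\bar k$ is not too positive, or simply making $\epsilon\phi$ large enough in sup norm on a tiny ball, is delicate, because $J$ picks up $\epsilon\,d\phi$ terms. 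This is the step I expect to be the main obstacle.

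The fallback is to use the ADM-energy-preserving reflection symmetry: the Jang-equation argument behind Theorem \ref{QST2} depends only on the sign of $\operatorname{tr} k$ near the boundary region. I would therefore check whether the proof of Theorem \ref{QST2} actually needs the trace condition when $U_0=M$ has no boundary, since in that case there is no boundary blow-up of the Jang graph to control. If so, the trace condition is vacuous and Step 2 disappears.

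\textbf{Step 3.} Apply Theorem \ref{QST2} to $(\bar g,\bar k)$ with the chosen $U_0=M$, $U_1$ and $U_2$. Completeness gives the absence of points of incompleteness, and hypotheses (3) and (4) follow from strictness, so $\bar E\ge0$. Letting the approximation converge gives $E\ge 0$.
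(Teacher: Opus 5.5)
\textbf{There is a gap at Step 2.} You leave it unresolved, and your primary route there fails in general. Suppose $\operatorname{tr}_{\bar g}\bar k>0$ on the ball $B$. Forcing $\operatorname{tr}\hat k\le 0$ with $\hat k=\bar k-\frac{h}{n-1}\bar g$ requires $h\ge\frac{n-1}{n}\operatorname{tr}_{\bar g}\bar k$ on $B$. On the collar where $h$ returns to $0$, Proposition \ref{newDEC} only gives $\hat\mu-|\hat J|\ge\bar\mu-|\bar J|+\frac{n}{2(n-1)}h^2-h\operatorname{tr}_{\bar g}\bar k-|dh|$. Since $h$ passes through all intermediate values there, the two middle terms can be as negative as $-\frac{n-1}{2n}(\operatorname{tr}_{\bar g}\bar k)^2$, and $|dh|$ grows as the collar shrinks. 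The strict-DEC margin from \cite{EHLS} can be arbitrarily small, so it cannot absorb this. Your fallback is only conditional (``if so''), and its rationale is off. Even when $\partial M=\varnothing$, the Jang graph can still blow up along interior MOTS/MITS. That is handled by working on the complete graphical component, not by the absence of a boundary.

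\textbf{The missing observation is where the trace hypothesis actually enters the proof of Theorem \ref{QST2}.} It is used only to make the cross term $-h\operatorname{tr}_g k$ in $\hat\mu$ nonnegative, and only where $h\ge 0$ is nonzero. The auxiliary function $h$ exists solely to manufacture a strictly outer trapped boundary $\partial M_0$ inside $U_0$. With one end you take $U_0=M$; in your notation $K'=\varnothing$, i.e.\ $U_0=U_1=U_2=M$, which makes hypotheses (2) and (4) vacuous. Then no trapped boundary is needed, $h\equiv 0$, and $\hat k=k$. This is exactly the paper's argument:
\begin{enumerate}
\item Perturb via \cite{EHLS} Theorem 18 to harmonic asymptotics with strict DEC and $E$ nearly unchanged.
\item Solve the Jang equation with blow-up as in \cite{E10,E3}.
\item The graphical component carrying the asymptotically flat end is complete, has the same energy, and has positive scalar curvature in the strong spectral sense.
\item \cite{BW1} Theorem 1.4 then gives $E\ge 0$; pass to the limit.
\end{enumerate}
Incidentally, your black-box route could also have been rescued without any local deformation. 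The flip $k\mapsto -k$ need not be local: done globally, it preserves $\mu$, $|J|_g$ and $E$. So you can always center a small ball at a point where $\operatorname{tr}_g(\pm k)<0$, or at any point if $\operatorname{tr}_g k\equiv 0$.
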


See Section 4 for detail and variant statements. 

\begin{rem}
According to the last remark following Section 6.3 in \cite{EHLS}, the positive mass theorem $E\geq|P|$ can be obtained by the positive energy theorems above. 
\end{rem}

\begin{rem}
During the preparation of this manuscript, Brendle and Wang \cite{BW2} showed a similar result to Theorem \ref{PET1} by analysing the regularised Jang equation. 
\end{rem} 

\begin{rem}
After this preprint was completed, we became aware of an independent preprint \cite{HKLZ} which proves closely related results for asymptotically flat and asymptotically hyperbolic initial data sets with one end.  
\end{rem}

For manifolds with boundary, we could show a shielding theorem concerning the mean curvature. 
\begin{thm}[cf. \cite{LLU22} Theorem 1.7]\label{CZ2}
Let $(M^n,g)$, $n\ge 4$, be a complete asymptotically hyperbolic manifold with nonempty compact boundary $\partial M$. Set $U_0=M$, and let $U_1$, $U_2$, $D_0$, $D_1$ be as in Theorem \ref{QST}. 
If
\begin{enumerate}
    \item $R_g\ge -n(n-1)$ on $M$, 
    \item \label{CZ2a} $R_g>-n(n-1)+ \kappa$ on $\overline U_1\setminus U_2$, where $\kappa \in (0, \frac{4}{D_0 D_1})$ (see Remark \ref{rk:a} below), and
    \item \label{CZ2c} the mean curvature of $\partial M$ with respect to the normal pointing into $M$ satisfies
\begin{equation}\label{Hineq}
H<n-1+\frac{2\kappa D_1}{4-\kappa D_0 D_1}, \end{equation}
\end{enumerate} 
then the energy momentum vector of $\mathcal E$ is future-directed causal or vanishing.
\end{thm}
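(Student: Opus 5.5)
The plan is to reduce Theorem \ref{CZ2} to the quantitative shielding Theorem \ref{QST} by modifying the metric near the boundary, in the spirit of the proof of Theorem 1.7 in \cite{LLU22}.

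\textbf{Step 1: reduce to a manifold without boundary.} Glue a collar $\partial M\times[0,\infty)$ onto $M$ along $\partial M$ and pass to a conformal or warped modification. More precisely, we would treat the boundary as a "point of incompleteness at distance $0$". Alternatively, we can run the same argument that proves Theorem \ref{QST}, but allow the $\mu$-bubble to have a barrier at $\partial M$. In the proof of Theorem \ref{QST}, the largeness condition \eqref{largeness} is used to construct a warped $\mu$-bubble $\Sigma$ in $U_1\setminus \overline{U_2}$. Its prescribed function $h$ solves a Riccati-type ODE in the distance variable and blows up at $\partial U_0$ and at $\partial U_1$. Here we instead choose $h$ finite on $\partial M=\partial U_0$ with value $h|_{\partial M}$.

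\textbf{Step 2: choose the weight.} With only $\kappa<4/(D_0D_1)$ available, solve the ODE
\[
h'=-\tfrac{n}{4(n-1)}\bigl(\kappa-h^2\bigr)+\dots
\]
on the level of the rescaled distance $\rho=\dist_g(\cdot,\partial M)$. The aim is that $h\to+\infty$ near $\partial U_1$ (at distance $D_1$ from $U_2$), that the stability inequality closes on $\overline U_1\setminus U_2$ using $R_g+n(n-1)>\kappa$, and that on $U_0\setminus U_1$, where only $R_g\ge -n(n-1)$ holds, $h$ is decreasing through the region of width $D_0$. Solving explicitly, the value of $h$ at $\partial M$ comes out as $\frac{2\kappa D_1}{4-\kappa D_0D_1}$ up to normalisation. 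Hypothesis \eqref{Hineq}, $H-(n-1)<h|_{\partial M}$, then makes $\partial M$ a barrier. Here $n-1$ is subtracted because the comparison is to the hyperbolic background, where the $\mu$-bubble functional carries the $(n-1)$ term from $R+n(n-1)$. Consequently the minimiser of
\[
\mathcal A(\Omega)=\mathcal H^{n-1}(\partial^*\Omega)-(n-1)\mathrm{Vol}(\Omega)-\int(\chi_\Omega-\chi_{\Omega_0})h
\]
stays away from $\partial M$.

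\textbf{Step 3: conclude.} With the bubble region obtained, we cut $M$ along $\Sigma$ and proceed exactly as in Theorem \ref{QST}. Suppose the energy-momentum vector is not future causal. Then apply the Maskit gluing \cite{CD19} and the patching with hyperbolic space \cite{ACG} to produce a contradiction via the rigidity in the asymptotically flat positive energy theorem and \cite{HSY}. The main obstacle is Step 2. We must verify that the ODE solution has exactly the threshold value $\frac{2\kappa D_1}{4-\kappa D_0D_1}$ and that the boundary comparison (first variation at $\partial M$ with $H<n-1+h$) prevents touching. We would first try the explicit solution $h=\frac{2}{D_1}\cot$-type on $U_1\setminus U_2$, matched with a linear-in-$\rho$ continuation across $U_0\setminus U_1$, and check the matching constant.
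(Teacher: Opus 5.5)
\textbf{There is a genuine gap, and it sits in Step 3.} Your Step 3 misreads how Theorem \ref{QST} is proved here: that proof uses no $\mu$-bubble. After Section \ref{PIDS} one has an asymptotically flat initial data set $(\Check M,\Check g,\Check k)$ satisfying the DEC, with $\Check k=-\Check g/\sqrt a$ on the core and $E=|\Check P|=0$. The shielding function $h$ enters the \emph{initial data}, as $\hat k=\Check k-\frac{h}{n-1}\Check g$. By Proposition \ref{newDEC}, using $h\ge 0$ and $\tr_{\Check g}\Check k<0$,
\[
2(\hat\mu-|\hat J|)\ge R_g+n(n-1)+\tfrac{n}{n-1}h^2-2|\Na h| .
\]
The contradiction then comes from a positive energy theorem for data with a strictly outer trapped boundary: Jang with blow-up plus \cite{HSY}, or the rigidity in \cite{LLU21}.

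The idea you are missing for Theorem \ref{CZ2} is that $\partial M$ itself, in every copy produced by the construction, is this trapped boundary. With $h|_{\partial M}=\frac{2\kappa D_1}{4-\kappa D_0D_1}$ and $a<1$,
\[
\theta^+=H+\tr_{\partial M}\hat k=H-\frac{n-1}{\sqrt a}-h\le H-(n-1)-h<0
\]
by \eqref{Hineq}. So the $n-1$ in \eqref{Hineq} is $-\tr_{\partial M}(-g)$, coming from viewing $(M,g)$ as the data $(M,g,-g)$. It is not a term from $R_g+n(n-1)$. One must also check that the approximations (Theorem \ref{WANG} and Section \ref{PIDS}) preserve this strict inequality. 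The paper does this via the conformal formula for $H$, with $u$ close to $1$.

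\textbf{Why cutting along the bubble fails.} Cutting $M$ along your bubble $\Sigma$ leaves a manifold with boundary $\Sigma$ and $H_\Sigma=n-1+h$. Theorem \ref{QST} cannot be applied to it, since its hypothesis excludes boundary or incompleteness inside $U_0=M$; for the same reason your Step 1 does not work. With respect to the unmodified $\Check k$ one has $\theta^+(\Sigma)=n-1+h-\frac{n-1}{\sqrt a}>0$ wherever $h>(n-1)(a^{-1/2}-1)$. So $\Sigma$ is not trapped, and no boundary version of the positive energy theorem applies. If you do modify $k$ by $h$, $\Sigma$ becomes weakly trapped. But then $\partial M$ is already strictly trapped, so the bubble adds nothing and what remains is the argument above. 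In addition, a bubble chosen in the original metric is not tracked through the later perturbations, and for $n\ge 8$ it may be singular, so cutting along it is not directly available.

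\textbf{Step 2 is internally inconsistent.} $h$ must not blow up at $\partial U_1$, and your $\kappa-h^2$ has the wrong sign. What is needed is $R_g+n(n-1)+\frac{n}{n-1}h^2-2|\Na h|>0$ on $\overline U_1\setminus U_2$ and $\ge 0$ on $M\setminus U_1$. The profile that works is $h=\frac{\kappa}{2}\dist_g(\cdot,U_2)$ across the shell, reaching $\frac{\kappa D_1}{2}$, followed by $h'=\frac12 h^2$ across the remaining width $D_0$. This gives exactly $h|_{\partial M}=\frac{2\kappa D_1}{4-\kappa D_0D_1}$.
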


\begin{rem}\label{rk:a}
Fixing $D_0$ and $D_1$ in \eqref{Hineq} and letting $\kappa\to 0$ recover the classical condition $H\le n-1 $. If  $ \kappa \ge \frac{4}{D_0 D_1}$, then Theorem \ref{QST} applies without taking the boundary mean curvature into account. 
\end{rem}

The following density theorem tells that an asymptotically hyperbolic end can be approximated by Wang's asymptotics which allow perturbations aforementioned.  

\begin{thm}[Density theorem, cf. \cite{LLU22} Theorem 1.3, \cite{DGS} Proposition B.1]
    \label{WANG}
    Let $(M^n,g)$, $n\geq 3$, be a Riemannian manifold with an asymptotically hyperbolic end $\mE$ of H\"older type $(\a,\d)$ where $\a\in(0,1)$ and $\delta>\frac{n}{2}$. Suppose that $R_g \geq -n(n-1)$. Then, for any $\varepsilon > 0$, $\d' \in (\frac{n}{2}, \d)$ and any compact set $K\subset M$, there exists a metric $\tg$ with the following properties:
    \begin{enumerate}
        \item $\tg$ has Wang's asymptotics. 

        \item $R_{\tg} \geq -n(n-1)$ in $M$ and $R_{\tg} \equiv -n(n-1)$ outside a bounded set in $\mathcal E$. 

        \item $\|g-\tg\|_{C^{2,\a}_{\d'}(K\cup\mE)} < \varepsilon$.  
        \item $\left|\tilde \m - \m \right| < \varepsilon. $
        \item The metrics $g$ and $\tilde g$ are $\eps$-close as bilinear forms everywhere on $M$. 
    \end{enumerate}
\end{thm}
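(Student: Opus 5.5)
The plan is to follow the two-step strategy of \cite{DGS} (Proposition B.1) and \cite{LLU22} (Theorem 1.3): first deform $g$ near infinity so that it has Wang's asymptotics, and then restore the scalar curvature bound by a conformal correction. The end result is that the energy-momentum vector changes by at most $\varepsilon$.

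\textbf{Step 1: cutoff toward Wang's asymptotics.} Work in the asymptotically hyperbolic chart of $\mE$, where $g=\gH+e$ and $e\in C^{2,\a}_\d$. Let $\chi_R$ be a cutoff equal to $1$ on $\{r\le R\}$ and $0$ on $\{r\ge 2R\}$. Define $\hat e$ as the part of $e$ that already has Wang's form; concretely this is the leading $\rho^{n-2}$ term in the Fefferman--Graham/Wang expansion built from the mass aspect function, which is where the mass is encoded. Set
\[
g_R=\gH+\chi_R e+(1-\chi_R)\hat e .
\]
Because $\delta'<\delta$, the weighted norm $\|g_R-g\|_{C^{2,\a}_{\d'}}$ tends to $0$ as $R\to\infty$. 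The key point is to choose $\hat e$ so that the mass integrals of $g_R$ over large spheres stay within $\varepsilon$ of $\m$. Away from the chart, $g_R=g$, so the required closeness on $K$ and the bilinear-form closeness on all of $M$ hold once $R$ is large.

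\textbf{Step 2: conformal correction restoring $R\ge -n(n-1)$.} The error $f=R_{g_R}+n(n-1)$ satisfies $f\ge 0$ on $\{r\le R\}$ and $f\equiv 0$ where $g_R$ is exactly Wang-type with constant scalar curvature, or after an additional compactly supported correction. On the transition annulus $f^-$ is small in $C^{0,\a}_{\d'}$. I will solve
\[
-\tfrac{4(n-1)}{n-2}\Delta_{g_R}u+R_{g_R}u=-n(n-1)u^{\frac{n+2}{n-2}}
\]
with $u=1+v$, and then set $\tg=u^{4/(n-2)}g_R$, which has constant scalar curvature $-n(n-1)$ near infinity. The linearised operator is $\Delta-n$ plus a small perturbation. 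Since $\delta'\in(\frac n2,n)$ is in the isomorphism range of $\Delta_{\gH}-n$ on weighted Hölder spaces, the implicit function theorem gives $\|v\|_{C^{2,\a}_{\d'}}\lesssim\|f^-\|$, which is small.

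To keep $R_{\tg}\ge-n(n-1)$ on all of $M$, not just near infinity, I will instead solve $L v=\min(f,0)$-type data supported on the annulus. Following \cite{DGS}, I take a compact exhaustion and apply the maximum principle, or solve a localized Dirichlet problem on the end with $u\equiv1$ near the inner boundary and glue. The maximum principle gives $u\ge 1-\varepsilon$, and the bounds on $v$ give item (5).

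\textbf{Step 3: mass estimate and main difficulty.} Since $v=O(r^{-\delta'})$ with $\delta'>n/2$, the conformal factor contributes to the mass only through its $r^{-n}$ coefficient. That coefficient is controlled by $\int f^- u$, which tends to $0$. Combined with Step 1, this gives $|\tilde\m-\m|<\varepsilon$.

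I expect Step 3 to be the main obstacle, along with the design of $\hat e$. Under only $\delta>n/2$, the mass is a limit of flux integrals rather than an explicit coefficient, so showing that both the cutoff and the conformal factor change it by $o(1)$ requires writing the mass via integration by parts as a bulk integral of $R_g+n(n-1)$ plus boundary terms. That is where I would invest the effort, following \cite{LLU22}.
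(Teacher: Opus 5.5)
There is a genuine gap, in Step 1 (where you expected it) and in Step 2. First, the tensor $\hat e$ does not exist in the stated generality. For Hölder type $(\alpha,\delta)$ with only $\delta>\frac n2$, the perturbation $e$ has no expansion, and there is no mass aspect function from which to extract an $e^{-nr}$ ``Wang part''. The vector $\m$ exists only as a limit of flux integrals. Replacing $\hat e$ by some Wang-type tensor with energy-momentum $\m$ would not close the step as written either. Wang's asymptotics only give $R+n(n-1)=O(e^{-(n+1)r})$ with no definite sign, not constant scalar curvature as you assert, and matching $\m$ through the cutoff would still need its own argument.

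The paper avoids all of this by cutting off to the \emph{exact} hyperbolic metric, $g_\lambda=\chi_\lambda g+(1-\chi_\lambda)\bar g$, so that $g_\lambda=\gH$ for $\rho\ge\lambda+1$. The conformal factor then rebuilds both the asymptotics and the mass. The equation for $v_\lambda=u_\lambda-1$ has no source beyond $\{\rho\le\lambda+1\}$, so $v_\lambda\in C^{2,\alpha}_{-n}(\mE)$. Hence $u_\lambda^{4/(n-2)}\gH$ has an $e^{-nr}$ leading term, and a change of radial coordinate (\cite{DS15}, Theorem 5.2) puts it into Wang's form. The mass estimate comes from the closeness of $\tg_\lambda$ to $g$, not from trying to preserve $\m$ through the cutoff.

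Second, your conformal step does not give $R_{\tg}\ge -n(n-1)$ together with closeness. Solving $-a_n\Delta u+R_{g_R}u=-n(n-1)u^{\frac{n+2}{n-2}}$ forces $R_{\tg}\equiv -n(n-1)$ wherever it is solved. Then $v$ is driven by all of $f$, including $R_g+n(n-1)$ on the core, which is not small, and this breaks items (3) and (5). Your fallbacks do not repair this. Linear data of the form $\min(f,0)$ give no control on the sign of the nonlinear scalar curvature of $u^{4/(n-2)}g_R$. A Dirichlet problem with $u\equiv 1$ at an inner boundary, glued to $1$, creates a corner whose smoothing introduces negative scalar curvature.

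The missing idea is to \emph{prescribe} the scalar curvature with a nonnegative source:
\[
-a_n\Delta_\lambda u_\lambda+R_\lambda u_\lambda=-n(n-1)u_\lambda^{\frac{n+2}{n-2}}+\chi_\lambda\bigl(R_g+n(n-1)\bigr).
\]
This gives $R_{\tg_\lambda}+n(n-1)=u_\lambda^{-\frac{n+2}{n-2}}\chi_\lambda(R_g+n(n-1))\ge 0$ on all of $M$, with equality to zero for $\rho\ge\lambda+1$. The hypothesis $R_g\ge -n(n-1)$ is exactly what makes the constants $1\pm\tau$ a subsolution/supersolution pair once $\lambda$ is large, uniformly in the exhaustion parameter. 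One solves Neumann problems on $M_\sigma=\{\rho\ge\sigma\}$, which is needed because $M$ may be incomplete with noncompact core, and passes to the limit. This yields a global solution with $|u_\lambda-1|\le\tau$. That bound gives item (5) directly. Item (3) on $K$ follows from local elliptic estimates, because where $\chi_\lambda=1$ the equation for $v_\lambda$ is homogeneous in $v_\lambda$.
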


Then, by completeness of manifolds, along with the construction of initial data sets in the proof of Theorem \ref{QST}, we obtained the following result. 

\begin{thm}\label{main}
Let $(M^n,g)$, $n\ge 4$, be a complete manifold with $R_g\geq -n(n-1)$, and suppose that it has at least one asymptotically hyperbolic end $\mathcal E$ of H\"older type $(\a,\d)$, where $\a\in(0,1)$ and $\delta>\frac{n}{2}$. Then the energy momentum vector of $\mathcal E$ is future-directed causal or vanishes. 
\end{thm}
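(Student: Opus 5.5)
We deduce Theorem \ref{main} from the quantitative shielding theorem, Theorem \ref{QST}, after a conformal or density modification that makes the largeness condition \eqref{largeness} hold. Completeness of $g$ makes hypothesis (1) of Theorem \ref{QST} automatic on any $U_0$, and $R_g\ge -n(n-1)$ gives hypothesis (2). The remaining issue is the strict largeness \eqref{largeness} on the annular region $\overline{U_1}\setminus U_2$. Since $M$ is complete, we may choose the neighborhoods so that $D_0$ and $D_1$ are as large as we like. Then the threshold $\frac{4}{D_0D_1}$ can be made arbitrarily small, but it never reaches zero. We therefore need to perturb $g$ so that the scalar curvature exceeds $-n(n-1)$ by a definite small amount on a large compact shell, while changing the energy momentum vector only slightly.

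\textbf{Step 1 (density).} Argue by contradiction: suppose the energy momentum vector of $\mE$ is neither future causal nor zero. This is an open condition, so by Theorem \ref{WANG} we may replace $g$ by $\tg$ with Wang's asymptotics. The new metric has $R_{\tg}\ge-n(n-1)$, has $R_{\tg}\equiv -n(n-1)$ near infinity in $\mE$, has mass aspect within $\eps$ of that of $g$, and is $\eps$-close to $g$ as a bilinear form everywhere. The last property keeps $\tg$ complete and distances comparable, and the bad causal character persists.

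\textbf{Step 2 (creating strict positivity on a shell).} Fix a large compact shell $\overline{U_1}\setminus U_2$ inside the interior of $M$, away from the asymptotic region where the mass is read off. Make a small conformal deformation $\hat g=u^{4/(n-2)}\tg$, with $u$ solving
\[
-\tfrac{4(n-1)}{n-2}\Delta_{\tg}u+(R_{\tg}+n(n-1))u-\ldots,
\]
or more simply a compactly supported perturbation, chosen so that $R_{\hat g}+n(n-1)\ge \eta>0$ on the shell and $R_{\hat g}\ge -n(n-1)$ everywhere. One way to do this is the following. Take $u=1+\sigma v$, with $v$ a positive function that is superharmonic in the appropriate sense and decays like $r^{-n}$ in the hyperbolic end, so that its contribution to the mass is $O(\sigma)$. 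Then choose $D_0,D_1$, equivalently the size of the shell, large enough that $\frac{4}{D_0D_1}<\eta$. The two choices are in tension: enlarging the shell tends to shrink $\eta$. For this reason the perturbation should instead be constructed after fixing the shell, using $\operatorname{dist}$ bounds that depend only on the completeness of $\tg$ and on its closeness to $g$.

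\textbf{Step 3 (conclusion).} Apply Theorem \ref{QST} to $\hat g$. It gives that the energy momentum vector of $\hat g$ is future causal or zero. Letting $\sigma,\eps\to0$ then contradicts the strict bad character obtained in Step 1, using continuity of the energy momentum vector under these perturbations.

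\textbf{Main obstacle.} The main obstacle is Step 2: producing a definite lower bound $\eta$ on a fixed shell while keeping $R\ge -n(n-1)$ globally on a noncompact complete manifold with arbitrary other ends, where no control is available. A cleaner alternative is to follow the paper's hint. Instead of modifying $g$, build the initial data set used in the proof of Theorem \ref{QST}, which comes from Maskit gluing and hyperbolic patching and carries a $k$ term. Then use $\operatorname{tr}k$, for instance taking $k=g$ on the annulus, to supply the missing positive energy $\mu-|J|$ required by Theorem \ref{QST2}. Completeness lets $D_0D_1\to\infty$, so the required largeness becomes arbitrarily weak and can be absorbed by this construction.
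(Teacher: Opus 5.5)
Your proposal has a genuine gap, exactly where you flag it. Beyond Theorem \ref{QST}, the whole content of Theorem \ref{main} is producing a definite positive margin on a fixed shell, one that does not depend on $U_0$, and you never produce it.

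\textbf{Step 2 and the fallback.} Step 2 is left open: there is no global construction of $v$, and no check that $R_{\hat g}\ge -n(n-1)$ off the shell. The fallback, as stated, supplies nothing. For $k=cg$ with $c$ constant one has $J=0$ and $2\mu=R_g+c^2n(n-1)$. So $k=\pm g$ gives $2(\mu-|J|)=R_g+n(n-1)$, which is precisely the quantity lacking a margin. Moreover, $k=g$ has $\operatorname{tr}k=n>0$, which violates hypothesis (2) of Theorem \ref{QST2}. Cutting $k$ off to the annulus creates $J=(1-n)\,d\phi$.

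\textbf{The paper's mechanism.} A margin needs $c^2>1$, and the paper gets it from the hyperbolic patching. Theorem \ref{ACGdeformation} makes the far end exactly hyperbolic with $R=-n(n-1)/a$, where $a=a(\rho_0)<1$. Embedding this end as $t=\sqrt{a+|x|^2}$ forces $\Check k=-\Check g/\sqrt a$ on the non-flat part. Hence $\operatorname{tr}\Check k<0$, and on $\{\rho\le\rho_0\}$, where the metric is untouched, $2(\Check\mu-|\Check J|)=R+\frac{n(n-1)}{a}\ge n(n-1)\frac{1-a}{a}=:\gamma>0$. With $\rho_0$ fixed, take $U_2=\{\rho>\rho_0\}$ and $U_1=\{\rho>\rho_0-1\}$, so $D_1$ is fixed. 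Only then is $U_0$ chosen, using completeness, with $4/(D_0D_1)<\gamma$, and the proof of Theorem \ref{QST} is run on the initial data set. There is no tension, because $\gamma$ is uniform on all of $\{\rho\le\rho_0\}$ and fixed before $D_0$.

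\textbf{Completing your Riemannian route.} Your instinct in Step 2 is sound and can be completed. This would give a genuinely different route that uses Theorem \ref{QST} as a black box.
Fix the shell in $\mE$ and take $f\ge0$ compactly supported with $f>0$ on the shell. Solve $(-\Delta_g+n)v=f$ by Dirichlet exhaustion. The maximum principle gives $0\le v\le \sup f/n$ with no control at all on the other ends, since the zeroth-order term $n>0$ does the work. A barrier argument on $\mE$ gives $v=O(e^{-nr})$.
For $u=1+\sigma v$ and $\hat g=u^{4/(n-2)}g$, Bernoulli's inequality $u^{\frac{n+2}{n-2}}-u\ge\frac{4}{n-2}\sigma v$ yields
\[
u^{\frac{n+2}{n-2}}\bigl(R_{\hat g}+n(n-1)\bigr)\ge \bigl(R_g+n(n-1)\bigr)u+\tfrac{4(n-1)}{n-2}\,\sigma f .
\]
So $R_{\hat g}\ge -n(n-1)$ everywhere, with a definite margin $\eta$ on the shell. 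Also $\hat g$ is uniformly equivalent to $g$, hence complete, and $\m$ moves by only $O(\sigma)$, so it stays neither future causal nor zero for small $\sigma$. One then chooses $U_0$ last and applies Theorem \ref{QST} to $\hat g$ for a contradiction. This is essentially the $\tau f$ device the paper uses in proving Corollary \ref{CZ}.

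As written, though, you only gesture at this ("superharmonic in the appropriate sense") and declare it the unresolved main obstacle. So the proposal is not yet a proof.
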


As a consequence, we also proved an inextendibility result. 
\begin{cor}\label{CZ}
Let $(M^n,g)$,  $n\ge 4$, be a Riemannian manifold with an asymptotically hyperbolic end $\mathcal E$ of H\"older type $(\a,\d)$, where $\a\in(0,1)$ and $\delta>\frac{n}{2}$. 
 If the energy momentum vector of $\mathcal E$ is neither future-directed causal nor vanishes, then there exists a constant $D$, depending only on $\m(\mE,g)$ and $\|g-\gH\|_{C^{2,\a}_{\d}(\mathcal E)}$, with the following property. In the $D$-neighborhood $N_D(\mathcal E)$ of $\mathcal E$, at least one of the following holds:
\begin{enumerate}
 \item $R_g<-n(n-1)$ somewhere in $N_D(\mathcal E)$, or

 \item $N_D(\mathcal E)$ contains an incomplete point. 
\end{enumerate}
\end{cor}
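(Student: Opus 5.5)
We argue by contraposition, with Theorem \ref{QST} as the main tool. Assume the energy momentum vector of $\mathcal E$ is neither future-directed causal nor vanishing, and that $R_g\ge -n(n-1)$ on $N_D(\mathcal E)$ and $N_D(\mathcal E)$ contains no incomplete point. The goal is to choose $D$, depending only on $\m(\mE,g)$ and $\|g-\gH\|_{C^{2,\a}_{\d}(\mathcal E)}$, so that a slight modification of $g$ satisfies every hypothesis of Theorem \ref{QST} with $U_0\subset N_D(\mathcal E)$. That theorem then forces the energy momentum vector to be future-directed causal or zero, which is the desired contradiction.

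\emph{Step 1: create the largeness margin.} The difficulty is the largeness assumption \eqref{largeness}: it demands $R_g+n(n-1)$ strictly positive on $\overline{U_1}\setminus U_2$, which $g$ need not satisfy. We first apply Theorem \ref{WANG} to replace $g$ by a metric with Wang's asymptotics whose mass differs from $\m$ by less than $\varepsilon$. Since the set of vectors that are not future causal is open, for small $\varepsilon$ the new metric still has non-causal energy momentum. Next, following the Maskit-type gluing and deformation of \cite{CD19} and \cite{ACG}, as in the proof of Theorem \ref{QST}, we deform the metric on a fixed annular region $A=\overline{U_1}\setminus U_2$ far out in $\mathcal E$. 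On $A$ we make $R+n(n-1)\ge c>0$, leave the metric unchanged inside $U_2$ (outer region) and on $M\setminus U_1$, and perturb the mass so little that it stays non-causal. The constant $c$ and the width $D_1$ of $A$ depend only on the asymptotic data, namely $\m$ and the weighted norm of $g-\gH$ on $\mathcal E$.

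\emph{Step 2: choose the neighbourhoods.} Take $U_1$ to be the region outside a coordinate sphere containing $A$, and $U_2$ the region outside $A$. Then choose $D_0>4/(cD_1)$ and set $U_0=\{x:\dist_g(x,U_1)<D_0\}$, so that $\dist_g(\partial U_0,U_1)=D_0$. With $D=D_0+\operatorname{diam}$ of the compact part of $U_1\setminus\mathcal E$ (bounded in terms of the asymptotic data), we get $U_0\subset N_D(\mathcal E)$. Because of the absence of incomplete points, $\overline{U_0\setminus\mathcal E}$ is compact. Hypotheses (1) and (2) of Theorem \ref{QST} hold on $U_0$ by assumption, combined with the fact that the deformation only increased scalar curvature on $A$. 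Hypothesis (3) holds since $c>4/(D_0D_1)$.

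\emph{Main obstacle.} The crux is Step 1. The positive lower bound $c$ on $A$ and the width $D_1$ must depend only on $\m$ and $\|g-\gH\|_{C^{2,\a}_{\d}(\mathcal E)}$. At the same time, the deformation must keep $R\ge -n(n-1)$ globally and move the energy momentum vector by less than its distance to the future causal cone. That distance is controlled by $\m$ alone. The approach we would try first is to glue to an exact Kottler-type or perturbed-hyperbolic model with slightly adjusted mass parameter. The model's scalar curvature excess is explicit, and the gluing error is bounded by the decay rate $\d>n/2$ together with the $C^{2,\a}_\d$ norm. This yields uniform constants.
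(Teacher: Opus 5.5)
Your contrapositive framing and Step 2 match the paper: choose $U_1,U_2$ around an annulus, take $D_0>4/(cD_1)$, and invoke Theorem \ref{QST}. The gap is in Step 1, which you rightly call the crux. It has no valid mechanism, and in general it cannot work as stated.

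You ask for a change of metric supported in the compact annulus $A$ (unchanged on $U_2$, which contains the infinity of $\mE$, and on $M\setminus U_1$) that gives $R+n(n-1)\ge c>0$ on $A$. Near-hyperbolic regions resist exactly this.
\begin{itemize}
\item To first order: the formal adjoint of the linearised scalar curvature at $\gH$ annihilates $\mN$. So a variation $h$ supported in $A$ satisfies $\int_A DR(h)\cosh r\,d\mu_{\gH}=0$, and since $\cosh r>0$, $DR(h)$ cannot be positive on all of $A$.
\item Exactly: if $g=\gH$ near $A$, which your hypotheses allow, insert the modified annulus into $\HH^n$. The result equals $\gH$ outside a compact set, has $R\ge -n(n-1)$ with strict inequality on $A$, and has zero mass. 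This contradicts the rigidity case of the hyperbolic positive mass theorem \cite{XW, CH03}.
\item Where $g$ is not static, a Corvino-type local deformation can produce some $c>0$. Its size is governed by the nondegeneracy of that adjoint operator on $A$, which degenerates as $g$ approaches $\gH$ far out in $\mE$. It is not controlled by $\m$ and $\|g-\gH\|_{C^{2,\a}_{\d}(\mE)}$.
\end{itemize}

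The tools you cite do not supply such a deformation. The Maskit gluing of \cite{CD19} and the deformation of \cite{ACG} are used in Section \ref{PIDS} only to rotate the energy momentum vector and to make the end exactly hyperbolic. Kottler metrics are Einstein, so $R\equiv -n(n-1)$ on them. An excess appears only in a transition region where the mass parameter increases outward, and that changes the metric on $U_2$, against your own requirement. Relatedly, if the metric is unchanged on $U_2$, the mass does not move at all, so ``perturb the mass so little'' is vacuous.

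The missing idea is that the margin must come from a global, non-compactly supported change, with the resulting mass shift made small. The paper adds a source to the conformal equation of Appendix \ref{Density}. For $f\ge 0$ compactly supported and positive on an annulus, it solves
\[
-\tfrac{4(n-1)}{n-2}\Delta_{g_\l}u_\tau+R_{g_\l}u_\tau=-n(n-1)u_\tau^{\frac{n+2}{n-2}}+\chi_\l\bigl(R_g+n(n-1)\bigr)+\tau f,
\]
so that $R_{\tilde g_\tau}+n(n-1)=u_\tau^{-\frac{n+2}{n-2}}\bigl(\chi_\l(R_g+n(n-1))+\tau f\bigr)$. This has the following consequences.
\begin{itemize}
\item It gives a margin of order $\tau f$ on the annulus.
\item It creates no new points with $R<-n(n-1)$.
\item Because $u_\tau$ is close to $1$, the two metrics are uniformly equivalent. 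Hence no incomplete points are created and $N^{\tilde g}_D(\mE)\subset N^g_{D'}(\mE)$.
\item The conformal factor is not compactly supported, so the mass is allowed to move; this is exactly what evades the obstruction above. For $\l$ large and $\tau$ small it stays spacelike or past-pointing, and Theorem \ref{QST} then applies to $\tilde g_\tau$ as in your Step 2.
\end{itemize}
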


\

This text is organised as follows. In Section \ref{Preliminaries}, some definitions of asymptotically hyperbolic manifolds and asymptotically flat initial data sets are reviewed.  In Section \ref{PIDS}, we construct an asymptotically flat initial data set form an asymptotically hyperbolic manifold. In Section \ref{AF},  Theorem \ref{QST2} and \ref{PET1} are shown. In Section \ref{shield}, Theorem \ref{QST} and Theorem \ref{CZ2} are proved. In Section \ref{complete}, completeness and inextendibility of asymptotically hyperbolic manifolds are discussed. In Section \ref{ALH}, corresponding results for asymptotically locally hyperbolic ends are shown.  In Appendix \ref{Density}, we prove Theorem \ref{WANG}.  %In Appendix \ref{Singular}, we discuss how to adopt Section 3 in \cite{BW1} for unbounded singular set.   

\

\textbf{Acknowledgements }: 
The author would like to thank Prof. Richard Schoen for his guidance, suggestions on Jang graph and his support in NSF grant DMS-2005431. The author is grateful to Prof. Greg Galloway for explaining \cite{ACG} and valuable comments on the manuscript.  The author would like to thank Shihang He and Kai-Wei Zhao for patiently answering his questions. The author is thankful to Prof. Erwann Delay for several suggestions which improve this article, Prof. Piotr Chruściel for answering questions regarding \cite{CGNP18} and Prof. Lan-Hsuan Huang for stimulating discussion.  The author is grateful to Banff International Research Station for their hospitality where part of this work is carried out. The author also thanks the hospitality of Harvard’s Black Hole Initiative and Martin Lesourd for bringing up and early contribution to this project. The author thanks Andoni Royo Abrego for suggesting the references \cite{G, DGS}.

\section{Preliminaries}\label{Preliminaries}
\begin{defn}\label{incomplete}
	Let $(X,d)$ be a metric space and $(\overline X,d)$ be its completion. For example, $\overline X$ can be constructed by taking appropriate equivalence classes of Cauchy sequences. A point in $\overline X\setminus X$ is called a \emph{point of incompleteness} for $X$. A set $S\subset X$ is said to be \emph{complete} if its closure in $X$ remains closed under the inclusion $X\to\overline X$.   
\end{defn}

\begin{defn}\label{strinfty}
Let $M^n$ be a noncompact manifold with a distinguished end $\mathcal E$. We say that $(M,g)$ possesses a \emph{structure of infinity} along $\mathcal E$ if $\mathcal E$ possesses no points of incompleteness and there exists a diffeomorphism 
\begin{equation}\Phi:\mathcal E\to \R^n\setminus B_{r_0}\end{equation} for some $r_0>0$
and the coordinate norm $r:=|x|$ diverges as we go out along the end. The set $\mathring M=M\setminus \mathcal E$ is called the \emph{core}. Note that in our definition, the core is not assumed to be compact and $(M,g)$ is not assumed to be complete. We will often identify $\mathcal E$ with the set $\{|x| \ge r_0\}$. Under $\Phi$, we can express
\begin{equation*}
    \gH = dr^2 + \sinh^2(r) g_{\mS^{n-1}}
\end{equation*}
the hyperbolic metric on $\R^n \setminus B_{r_0} \cong (r_0,\infty) \times \mS^{n-1}$,  which we extend arbitrarily to a complete metric on all of $M$ and denote by $\overline g$. We interchangeably denote $(\R^n\setminus B_{r_0}, \gH)$ by $(\HH^n\setminus B_{r_0}, \gH)$ when the context is clear.

We also allow for $M$ to have a boundary, but of course require $\partial M$ to not intersect~$\mathcal E$.
\end{defn}

\begin{defn}\label{weighted holder space} Let $(M^n, g)$, $\mathcal E$, and $\Phi$ be as in the previous definition.  Let $N$ be a closed subset of $M$ which contains $\mathcal E$ and such that $N\setminus \mathcal E$ is compact.
For $u:N\to \R$, we say that 
$u \in C_\delta^{k,\alpha}(N)$ for a non-negative integer $k$, and $0< \alpha \leq1$ if
\[
\left\| u \right\|_{C^{k, \alpha}_\delta(N)} 
:= \|u\|_{C^{k,\alpha}(N\setminus \mathcal E)}+\sup_{x \in \HH^n\setminus B_{r_0}} e^{\d r(x)} 
\left\| \Phi_* u \right\|_{C^{k, \alpha}(B_1(x))} < \infty.
\]
Here is an equivalent definition of the
$C^{k, \alpha}_\delta(\mE)$ norm. 
\[
\left\|u \right\|_{C^{k, \alpha}_\delta(\mE)} 
= \sum_{0\leq j \leq k} \sup_{\HH^n \setminus B_{r_0}} |e^{\delta r} \nabla^j (\Phi_* u)| 
+ \|e^{\delta r} \nabla^k(\Phi_* u)\|_{C^{0,\alpha}(\HH^n \setminus B_{r_0})}.
\] 
One can naturally extend the definition to tensor bundles. 
\end{defn}

\begin{defn} \label{defAHmanifold} 
Let $(M^n,g)$ be a noncompact smooth Riemannian manifold possessing a structure of infinity $\Phi$ along $\mathcal E$. Let $\a\in (0,1]$ and $\d>\frac{n}{2}$. 
We say that $(M, g)$ is an asymptotically hyperbolic 
manifold of H\"older type $(\a,\d)$, 
if there exists a diffeomorphism
\[
\Phi: \mE \to \HH^n \setminus B_{r_0}, 
\]
such that $h:=\Phi_* g - \gH \in C^{2, \a}_{\d}(\mE)$ and 
\begin{equation} \label{decay2}
\int_{\HH^n \setminus B_{r_0}} |R_g + n(n-1)| \cosh r \, d\mu_{\gH} < \infty. 
\end{equation}
\end{defn}

Furthermore, $g$ has \textit{Wang's asymptotics} if 
\begin{equation*}
    \Phi_*g = dr^2 + \sinh^2(r) g_r
\end{equation*}
where
\begin{equation*}
    g_r = g_{\nS^{n-1}} + m e^{-nr} + O^{k,\alpha}\bigl(e^{-(n+1)r}\bigr) \,,
\end{equation*}
is an $r$-dependent family of symmetric 2-tensors on $\mathbb{S}^{n-1}$, $ m\in C^{k,\a}$ is a symmetric 2-tensor on $\mathbb{S}^{n-1}$. Note that $\tr_{\mathbb{S}^{n-1}} m$ is called the mass aspect function. 

\

Let 
$\mN := \{ V \in C^{\infty}(\HH^n) \mid \Na^{\HH}\Na^{\HH} V = V g_\HH \}$.
This is a vector space of the static potentials of the hyperbolic space with a basis consisting of the functions 
\[
V_{(0)} = \cosh r, \, 
V_{(1)} = x^1 \sinh r, \dots , \,
V_{(n)} = x^n \sinh r,
\]
where the functions $x^1, \dots, x^n$ are the coordinate functions
on $\R^{n}$ restricted to $\mS^{n-1}$. 

The linear functional $H^g_{\Phi}$ on $\mN$ defined by (\cite{XW, CH03})
\[
H_{\Phi}^g (V)
= \lim_{r \to \infty} \int_{\mS_r} \left(
V (div_{\gH} h- d tr_{\gH} h) + (tr_{\gH} h) dV - h(\nabla^{\HH} V, \cdot)
\right) (\nu) \, d \mu_{\gH}
\] 
is called the {\em mass functional} of $(M,g)$ with respect to $\Phi$, where $\nu$ is the unit normal vector pointing to the infinity of $\mE$ with respect to $g_{\HH}$. The energy momentum vector $\m(\mE, g)=(m_0,m_1,...,m_n)\in \R^{n+1}$ is defined by $m_{\b}:=H^g_{\Phi}(V_{(\b)})$ for $\b=0,1,...,n$. 

\

Correspondingly, one can define asymptotically locally hyperbolic ends with different backgrounds and a scalar mass with different static potentials, see \cite{HJ} Section 2 for details. 

\

A 3-tuple $(M,g,k)$, where $g$ is a Riemannian metric and $k$ is a symmetric $(0,2)$-tensor, is called an initial data set. Define the conjugate momentum tensor by $\pi=k-(tr_g k)g$. Under constraint equations, we can define the mass density $\mu$ and the current density $J$ by
$$\mu=\frac{1}{2}(R_g+(tr_g k)^2-|k|_g^2)$$
and $$J=div_g(k-(tr_gk)g)=div_g\pi.$$
$(M,g,k)$ is said to satisfy the dominant energy condition if 
$$\mu\geq|J|_g.$$ 

An asymptotically hyperbolic manifold $(M,g)$ can be regarded as an initial data set $(M,g,-g)$. In this case, the dominant energy condition is translated into $$R_g\geq -n(n-1).$$ 

On the other hand, for asymptotically flat initial data sets (\cite{EHLS} Section 1, \cite{LLU21} Section 2), one can define the ADM energy-momentum vector $(E,P)$  
$$
E = \frac{1}{2(n-1)|\mathbb{S}^{n-1}|} \lim_{r \to \infty} \int_{|x| = r}  ( g_{ij,i} - g _{ii, j} ) \nu^j,
$$ 
$$P_i:=\frac{1}{(n-1)|\mathbb{S}^{n-1}|} \lim_{r \to \infty} \int_{|x| = r}  \pi_{ij} \nu^j, \,\ \,\ \,\ i=1,2,...,n,$$
where the outward unit normal $\nu$ and surface integral are with respect to the Euclidean metric. 

For a smooth closed hypersurface $S\subset M$, we say $S$ is a weakly outer trapped surface if on $S$, the outer null expansion 
$$\theta_+=H+tr_S k \leq 0,$$
and a marginally outer trapped surface ($MOTS$) if 
$$\theta_+=0;$$
correspondingly, 
a weakly inner trapped surface if the inner null expansion 
$$\theta_-=H-tr_S k \leq 0,$$
and a marginally inner trapped surface ($MITS$) if 
$$\theta_-=0,$$ 
where $H$ is computed with respect to the normal pointing to the infinity of the designated end $\mathcal{E}$. A surface is weakly trapped if it is either weakly outer trapped or weakly inner trapped.  

\begin{rem} In this note, the mean curvature is computed in the convention that $\mathbb{S}^2\subset \R^3$ has positive mean curvature with respect to the outward normal.  
\end{rem}

The positive mass theorem (\cite{EHLS}) states that for an asymptotically flat initial data set satisfying the dominant energy condition, $E\geq |P|$. While for initial data sets with weakly trapped boundary, it is studied in \cite{GL} and \cite{LLU21}. Moreover, if $E=0$, then $(M,g,k)$ is topologically $\R^n$ and can be isometrically embedded into Minkowski space (\cite{SY2, E3, LLU21}).  These results were stated up to dimension 7. And we will show corresponding statements in higher dimensions in Section \ref{AF}.  

\section{Preparation of initial data sets with negative mass}\label{PIDS} In this section, we are going to construct an initial data set which has a Euclidean end as motivated by \cite{CD19}. 

\

Let $n\geq 4$. Let $(M,g)$ be an asymptotically hyperbolic manifold with $R_g\geq -n(n-1)$. By Theorem \ref{WANG}, we can assume that it is with Wang's asymptotics. Assume on the contrary, its energy momentum vector $\m=(m_0, m_1,...,m_n)$ is neither causal future-pointing nor vanishing. Here we follow the strategy of \cite{CD19} and \cite{CGNP18}. 

\subsection{Construction of a metric with past-pointing timelike energy momentum vector}
If the energy momentum vector is spacelike, we can choose a coordinate such that 
\be
\begin{split}
\m=(m_0, m_1,0,...,0), 
\end{split}
\ee 
where $m_1 \geq |m_0|$ by the assumption of its causal property and $m_0<0$. 

\

Then, observe that Theorem 3.3 in \cite{CD15} is actually ``local", i.e. concerning only the conformal boundary. Thus for all $\eps>0$, it can be applied to $(M,g)$ to construct a metric $g_{\eps}$ on $M$ such that $g_{\eps}|_{U_{\eps}}=g_{\HH}$ and $g_{\eps}|_{U_{2\eps}}=g$, with $R_{g_{\eps}}\geq -n(n-1)$ on $M$. Here, $U_{a}$ denotes a coordinate half ball in the upper half space model centred on the conformal boundary with radius $a$ with respect to the compactified metric. Moreover, for $\b=0,1,...,n$, $m_{\eps,\b}-m_{\b}=o(\eps^{\frac{n-4}{2}})$ when $n\geq 5$ while $o(\eps^2)$ when $n=4$. (See \cite{CD19} Section 2.) 

\

Then, as described in \cite{CD19} Section 2, we consider 2 copies of $(M,g_{\eps})$ above, one can further impose a conformal transformation to obtain a half hyperbolic space on each of them. Then, we can glue the non-trivial parts along the boundary of the half hyperbolic space. We denote the resulted manifold by $(\hat M, \hat g)$ which is asymptotically hyperbolic with $R_{\hat g}\geq -n(n-1)$. 

\

One can follow the computation in \cite{CD19} Section 3 P.10 (3.8) to carry out another conformal transformation. We therefore obtain a timelike past-pointing energy-momentum vector denoted by $\hat \m$ if we choose $\gamma=\gamma(\m_{\eps})>0$ and $\eps$ aforementioned, both sufficiently small, where 

\begin{equation}
\begin{split}
\hat \m=\frac{2}{\sqrt{1-\cos ^2 \gamma}} \left( m_{\eps, 0}-\cos \gamma \, m_{\eps, 1}, \vec{0} \right). 
\end{split}
\end{equation}

Note that the Wang's asymptotics, if disturbed, can always be restored using Theorem \ref{WANG}.  

\subsection{Perturbation to a metric with an exactly hyperbolic end} By the previous subsection, it is sufficient to deal with the case where the energy-momentum vector is past-pointing timelike. Note that if $\hat{\m}$ is past-pointing timelike, one can apply a coordinate change as mentioned in \cite{CGNP18} so that 
\be 
\hat \m=(-\sqrt{\hat m_0^2-\sum_{i=1}^n \hat m_{i}^2},0,...,0).
\ee
 Then, one can apply \cite{CGNP18} Corollary 1.4 and choice of coordinates in \cite{CGNP18} Section 3 which only considers a tubular neighbourhood of the conformal boundary to perturb $\hat g$ to $\tilde g$ such that $R_{\tilde g}\geq-n(n-1)$ and $\tilde \m$ is close to $\hat \m$ with the mass aspect function being constant. In particular, $\tilde \m$ is still timelike past-pointing and thus $\tilde m_0$ is negative. Hence, the mass aspect function is pointwise negative. Then we can, for some $\rho_0>>1$ to be determined, further perturb the metric to $\tilde{g}^{\rho_0}$ by \cite{ACG} such that $\tilde g^{\rho_0}=\tilde g$ away from the conformal boundary and exactly hyperbolic near the infinity. To be precise, we here state the theorem for our application. 

\begin{thm}[\cite{ACG} Theorem 3.2] \label{ACGdeformation}
Let $(M,g)$ be asymptotically hyperbolic of the Wang's asymptotics with $R_g\geq -n(n-1)$. If the mass aspect function $tr_{\mS^{n-1}} m$ is a negative constant, then for any sufficiently large $\rho=\rho(tr_{\mS^{n-1}} m)$, there exists $g^{\rho}$ on $M$ with constants $a=a(\rho)\in (0,1)$ such that 
\begin{equation}
    g^{\rho} =
    \begin{cases}
      g, & \text{if}\ r\leq \rho \\
      g^{\rho,a}:=\frac{1}{1+\frac{r^2}{a}}dr^2+r^2h_0, & \text{if}\ r\geq 9 \rho. 
    \end{cases}
\end{equation}
Moreover, $R_{g^\rho}\geq -\frac{n(n-1)}{a}$ on $M$ and $a(\rho)\nearrow 1$ as $\rho\to \infty$.   
\end{thm}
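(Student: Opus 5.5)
The idea is to reduce the statement to an ODE for a radial ``mass function'', using the explicit scalar curvature of a warped product, and to treat the non-symmetric part of $g$ as a perturbation. I would work in the area-radius coordinate $r$ adapted to Wang's asymptotics. There, $g = W_g(r)^{-1}dr^2 + r^2(h_0 + \eta)$ on the end, where
\begin{equation*}
W_g = 1 + r^2 - \frac{2M}{r^{n-2}}, \qquad M<0,
\end{equation*}
is fixed by the constant mass aspect function, and $\eta = O(r^{-n-1})$ together with the corresponding corrections to $W_g$ carries all the angular dependence. The constancy of $\operatorname{tr}_{\mS^{n-1}} m$ is used exactly here: the leading correction is then purely radial, a negative-mass AdS–Schwarzschild term, and everything angular is of strictly higher order.

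For a rotationally symmetric model $dr^2/W + r^2h_0$, write
\begin{equation*}
W = 1 + \frac{r^2}{a} - \frac{2\mu(r)}{r^{n-2}}.
\end{equation*}
A direct computation gives
\begin{equation*}
R = -\frac{n(n-1)}{a} + \frac{2(n-1)\mu'(r)}{r^{n-1}},
\end{equation*}
so $R\ge -n(n-1)/a$ is equivalent to $\mu$ being nondecreasing. The model $g^{\rho,a}$ corresponds to $\mu\equiv 0$. The symmetric part of $g$ corresponds to
\begin{equation*}
\mu_g(r) = M - \tfrac12\bigl(\tfrac1a - 1\bigr) r^n,
\end{equation*}
which is negative on the whole end. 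The gluing is then the construction of a nondecreasing profile $\mu$ on $[\rho,9\rho]$ that agrees with $\mu_g$ near $\rho$ and vanishes near $9\rho$. We then set $g^\rho = dr^2/W + r^2\bigl(h_0 + (1-\chi)\eta\bigr)$, where $\chi$ is a cutoff equal to $0$ for $r\le 2\rho$ and $1$ for $r\ge 8\rho$. On $r\le\rho$ we keep $g$, where $R_g\ge -n(n-1)\ge -n(n-1)/a$ since $a<1$.

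The choice of $a$ is forced by the sign of $\mu_g'$. Since $\mu_g'<0$ once $a<1$, the profile cannot follow $\mu_g$ for long. I would choose $a=a(\rho)$ by
\begin{equation*}
\bigl(\tfrac1a - 1\bigr)\rho^n = \eps_0|M|
\end{equation*}
for a small fixed $\eps_0$, so that $a\nearrow 1$ as $\rho\to\infty$. On $[\rho,9\rho]$ the decrease of $\mu_g$ is then at most $C\eps_0|M|$. I would define $\mu$ to equal $\mu_g$ only on a short interval $[\rho,(1+\eps_0)\rho]$, and then rise monotonically from about $M$ to $0$ across the rest of $[\rho,9\rho]$ with $\mu'\gtrsim |M|/\rho$. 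The matching at $(1+\eps_0)\rho$ has to be done with care: one can smooth a $\max$-type junction, using that $\mu_g$ has small negative slope, of order $\eps_0|M|/\rho$, while the new profile has positive slope. This step is where the negativity $M<0$ is essential, because $\mu$ must go \emph{up} to $0$.

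The main obstacle is controlling the non-symmetric error. On the transition region the terms $\eta$, $\chi'\eta$, $\chi''\eta$ and the higher-order corrections to $W_g$ change the scalar curvature by $O(\rho^{-n-1})$. Meanwhile the monotone rise of $\mu$ gives a gain $2(n-1)\mu'/r^{n-1}\gtrsim |M|\rho^{-n}$. On the stretch where $\mu=\mu_g$ there is no gain at all, only the small loss from $\mu_g'<0$. There I would simply keep $\chi=0$, so that the metric is exactly $g$ and $R_g\ge -n(n-1)\ge -n(n-1)/a$ holds directly. Everywhere the cutoff $\chi$ is active I would make $\mu'\ge c|M|/\rho$. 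For $\rho\ge\rho_0(M)$ the gain then dominates the $O(\rho^{-n-1})$ error, which gives $R_{g^\rho}\ge -n(n-1)/a$ on all of $M$ and completes the construction.
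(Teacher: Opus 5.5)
Your overall framework is the right kind of construction. You use a radial mass function, the (correct) formula $R=-\frac{n(n-1)}{a}+\frac{2(n-1)\mu'}{r^{n-1}}$, and the sign $M<0$ so that $\mu$ can rise monotonically to $0$. The paper itself gives no proof and simply quotes \cite{ACG}. However, two steps of your argument are wrong as written.

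\textbf{Sign error in $\mu_g$.} Solving $1+r^2-2Mr^{2-n}=1+\frac{r^2}{a}-2\mu_g r^{2-n}$ gives $\mu_g=M+\frac12\bigl(\frac1a-1\bigr)r^n$, which is \emph{increasing}. Your own formula confirms this. The symmetric part of $g$ has scalar curvature $-n(n-1)$, which forces $2(n-1)\mu_g' r^{1-n}=n(n-1)\bigl(\frac1a-1\bigr)>0$. With $\mu_g'<0$ you could not have $R_g\ge -n(n-1)$ on the stretch where you keep $g$, so your account is internally inconsistent.

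Consequences of the correct sign:
\begin{itemize}
\item $\mu_g$ is not negative on the whole end. It crosses $0$ at $r_*$ with $\bigl(\frac1a-1\bigr)r_*^n=2|M|$.
\item Where $\mu=\mu_g$ there is a strict gain $n(n-1)\bigl(\frac1a-1\bigr)$, not ``no gain''. This gain is exactly what $a<1$ provides.
\item Taken at face value, your ``max-type junction'' creates a zone where $\mu'<0$ while the metric is no longer $g$ ($W\neq W_g$ but $\eta$ is still uncut). Nothing in your argument controls $R$ there.
\end{itemize}

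With the correct sign no junction is needed. For example:
\begin{itemize}
\item Choose $\bigl(\frac1a-1\bigr)(5\rho)^n=2|M|$, so that $a\nearrow 1$.
\item Let $\mu$ be a smoothed $\min(\mu_g,0)$, i.e.\ $W$ a smoothed $\max\bigl(W_g,1+\frac{r^2}{a}\bigr)$. It equals $\mu_g$ on $[\rho,3\rho]$, is nondecreasing, and vanishes for $r\ge r_*\approx 5\rho$.
\item Cut $\eta$ off on $[2\rho,3\rho]$, where the gain is $2n(n-1)5^{-n}|M|\rho^{-n}$.
\end{itemize}
The corrections to $W_g$ are purely radial and $O(r^{-n})$, because Wang's form has $dt^2$ coefficient exactly $1$. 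They can therefore be absorbed into $\mu_g$ without spoiling its monotonicity for large $\rho$.

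\textbf{The nonsymmetric error.} This is the step you yourself call the main obstacle. Constancy of $\tr_{\mS^{n-1}}m$ only lets you absorb the \emph{trace} part of $m$ into the radial coordinate. The trace-free part $\mathring m$ is not restricted by the hypothesis, so $\eta=c_n\,\mathring m\,r^{-n}+O(r^{-n-1})$, not $O(r^{-n-1})$.

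For a general perturbation of size $r^{-n}$ cut off on scale $\rho$, the scalar curvature changes by terms like $W\,\partial_r^2\bigl((1-\chi)\eta\bigr)\sim\rho^{-n}$. That is the same order as your gain $|M|\rho^{-n}$, so the domination argument does not close.

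The missing observation is structural. Write the metric as $W^{-1}dr^2+r^2(h_0+\zeta)$ and use $R=R_\Sigma-2\nu(H)-H^2-|A|^2$, which holds because the lapse is radial.
\begin{itemize}
\item To linear order, $\zeta$ enters $H$, $|A|^2$ and $\nu(H)$ only through $\tr_{h_0}\zeta$ and its $r$-derivatives.
\item It enters $R_\Sigma=r^{-2}R_{h_0+\zeta}$ through $r^{-2}DR_{h_0}(\zeta)=O(r^{-n-2})$.
\end{itemize}
Since the $r^{-n}$ part of $\eta$ is exactly $h_0$-trace-free, cutting it off costs only $O(\rho^{-n-2})$. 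The remainder costs $O(\rho^{-n-1})$, using that it is $O(r^{-n-1})$ together with its $r\partial_r$-derivatives. Quadratic terms are $O(\rho^{-2n})$.

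All of these are dominated by $n(n-1)\bigl(\frac1a-1\bigr)\sim|M|\rho^{-n}$ once $\rho$ is large. With these two corrections your argument proves the statement.
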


\subsection{Patching to a Euclidean end}
Let $\mN:=\{ \rho \geq 9 \rho_0 \}\subset \mE$ of $(\hat M, \tilde g^{\rho_0})$ which now is hyperbolic. We can isometrically embed $\mN$ as a hyperboloid $\left(t= \sqrt{ a + |x|^2}\right)$ into Minkowski space as a part of the initial data set $(\hat M,\tilde g^{\rho_0},-\frac{\tilde g^{\rho_0}}{\sqrt{a}})$.  Thus, as pointed out by \cite{CD19},  we can extend from $\{ \rho= 10 \rho_0 \}$ onward in a spacelike manner to any asymptotically flat slice, in particular, we pick a constant time slice.

Denote this initial data set by $(\Check M, \Check g, \Check k)$.  On $\{ \rho \geq 9 \rho_0 \}$, $(\Check M, \Check g, \Check k)$ is in Minkowski space. And away from its flat infinity, i.e. $\{\rho< 9 \rho_0\}$, $(\Check g, \Check k):=(\tilde g,-\frac{\tilde g}{\sqrt{a}})$. Note that by Theorem \ref{ACGdeformation}, $R_{\tilde g^{\rho_0}}\geq -n(n-1)>-\frac{n(n-1)}{a}$ on $\{\rho< 9 \rho_0 \}$. Thus, $(\Check M, \Check g, \Check k)$ is an asymptotically flat initial data set satisfying the dominant energy condition with $E(\Check g)=|\Check P|=0$.  

\section{Positive energy theorem for asymptotically flat initial data sets}\label{AF}
In this section, we are going to see how the assumptions in Theorem \ref{QST2} allows us to reduce the situation to an initial data set with only one asymptotically flat end with a strictly outer trapped boundary.  

\

Let $0\leq h\in C^{\infty}(M)$ to be determined, we are going to construct an initial data set $(M, \h g:= g, \h k:=k-\frac{h}{n-1}g)$. We also want to seek a subset $M_0$ containing the Euclidean end and $\overline{U_1\setminus U_2}$ on which $(\h g, \h k)$ satisfies DEC. Thus, we first have to know how the local energy density and current density change accordingly.

\begin{prop}\label{newDEC}
Let $(M,g,k)$ be an initial data set. For a smooth function $h:M\to \R$, define another initial data set $(M,g,\hat k:=k-\frac{h}{n-1}g)$. We have, 
\be
\begin{split}
\h \mu = \mu+\frac{n}{2(n-1)}h^2-h\, tr_g k \,\,\, \text{and} \,\,\, \h J= J+dh. 
\end{split}
\ee 
\end{prop}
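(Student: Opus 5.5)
The proof is a direct algebraic computation from the definitions of $\mu$ and $J$ in Section \ref{Preliminaries}. The metric $g$ is unchanged, so $R_g$ and the Levi-Civita connection are unchanged, and only the terms built from $k$ need to be recomputed. To keep the bookkeeping clean, set $c:=\frac{h}{n-1}$, so that $\h k=k-cg$. Throughout I use $\tr_g g=n$ and $\langle k,g\rangle_g=\tr_g k$.

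\emph{Energy density.} First compute the two quadratic invariants of $\h k$:
\begin{equation*}
\tr_g \h k=\tr_g k-nc,\qquad |\h k|_g^2=|k|_g^2-2c\,\tr_g k+nc^2 .
\end{equation*}
Squaring the first gives $(\tr_g\h k)^2=(\tr_g k)^2-2nc\,\tr_g k+n^2c^2$. Subtracting the second, the difference changes by $-2(n-1)c\,\tr_g k+n(n-1)c^2$ relative to the one for $k$. Halving and adding $\frac12 R_g$ then yields
\begin{equation*}
\h\mu=\mu-(n-1)c\,\tr_g k+\tfrac{n(n-1)}{2}c^2 .
\end{equation*}
Substituting $c=\frac{h}{n-1}$ gives exactly $\h\mu=\mu-h\,\tr_g k+\frac{n}{2(n-1)}h^2$.

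\emph{Current density.} The key observation is that the conjugate momentum shifts by a pure trace term:
\begin{equation*}
\h\pi=\h k-(\tr_g\h k)g=k-cg-(\tr_g k-nc)g=\pi+(n-1)c\,g=\pi+h\,g .
\end{equation*}
Since $g$ is parallel, $div_g(hg)=dh$. Hence $\h J=div_g\h\pi=J+dh$.

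There is no real obstacle here; the only care needed is to track the factors of $n$ and $n-1$ correctly. The choice of normalization $\frac{h}{n-1}$ in the definition of $\h k$ is precisely what makes $\h\pi-\pi=hg$ and gives the clean formula for $\h J$.
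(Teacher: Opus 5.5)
Your computation is correct and is essentially the paper's proof. The paper writes $\hat k=k+fg$ for a general $f$, obtains $\hat\mu=\mu+\frac12 n(n-1)f^2+(n-1)f\,\tr_g k$ and $\hat\pi=\pi+(1-n)fg$ (hence $\hat J=J+(1-n)\,df$), and then substitutes $f=-\frac{h}{n-1}$, which is your $c$ with the opposite sign.
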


\begin{proof}
Let $f\in C^\infty(M)$, consider an initial data set $(M,g,\hat k:=k+fg)$. 
By definition, 
\begin{equation}
\begin{split}
\hat \mu :=& \frac{1}{2}\lf( R_g-|\h k|_g^2+(tr_g \hat k)^2 \ri)\\
%=& \frac{1}{2}\lf( R_g-\la k+fg,k+fg\ra+(tr_g k + nf)^2 \ri)\\
%=& \frac{1}{2}\lf( R_g -|k|_g^2+(tr_gk)^2 -nf^2-2ftr_gk+n^2f^2+2nf tr_g k\ri)\\
=&\mu+\frac{1}{2}n(n-1)f^2+(n-1)ftr_gk.  
\end{split}
\end{equation}

\begin{equation}
\begin{split}
\hat \pi:=&\, \h k-(tr_g \h k)g=\pi+(1-n)fg.\\
%=&k-(tr_g k)g+(1-n)fg\\ 
\end{split}
\end{equation}

\begin{equation}
\begin{split}
\hat J:=& div_{g}(\hat \pi)= J + (1-n) df. \\
%=& div_{g}(\pi)+ (1-n) div_g(fg)\\
%=& J + (1-n) div_g(fg)\\
%=& J + (1-n) g(\Na f, \cdot)\\
\end{split}
\end{equation}

If $f:=\frac{-1}{n-1}h$, then 
\begin{equation}
\begin{split}
\hat \mu 
=&\mu+\frac{1}{2}\frac{n}{(n-1)}h^2-h\, tr_gk.  \\
\hat J= & J+dh. \\
\end{split} 
\end{equation}

\begin{comment}
Therefore, 
\begin{equation}
\begin{split}
\h \mu+\h J(\nu)=&\mu+J(\nu)+\frac{1}{2}\frac{n}{(n-1)}h^2-h\, tr_gk+\Na_{\nu}h. 
\end{split}
\end{equation} 
\end{comment}
\end{proof}

\begin{proof}[Proof of Theorem \ref{QST2} and Theorem \ref{PET1}]

From Proposition \ref{newDEC}, it suffices to consider the following inequality to construct an appropriate $h$ such that $h=0$ on $U_2$ and on $M_0\setminus U_2$, 
\begin{equation}\label{stability1}
\begin{split}
- \frac{4}{D_0D_1}<&\frac{n}{n-1}h^2-2h\,tr_{g} k -2|\Na h|\\ 
\end{split}
\end{equation}
where $M_0:=\overline{\{h<\infty\}}\subset U_0$. 
By \cite{LLU22} Section 6 (cf. \cite{LUY21} Section 3), for all sufficiently small $\gamma>0$, one can seek $M_0$ and construct $h\geq 0$ such that on $\overline{U_1}\setminus U_2$, 
\begin{equation}\label{stability2}
\begin{split}
\frac{n}{n-1}h^2 -2|\Na h|> - \frac{4}{D_0D_1}(1-\gamma), 
\end{split}
\end{equation} 
which implies \eqref{stability1} since $tr_g k\leq 0$ on $U_0\setminus \overline{U_2}$ by assumption.  Moreover, on $\{h<\infty\}\setminus U_1$, we get 
\begin{equation}\label{stability3}
\begin{split}
\frac{n}{n-1}h^2 -2|\Na h|\geq0.  
\end{split}
\end{equation} 

Then, by the largeness assumption on energy,  the asymptotically flat initial data set $(M_0,g,\hat k)$ satisfies the dominant energy condition.  While for any surface $S$ near $\p M_0$,  $\theta^+=\hat H+tr_{S} \hat k= H_g + tr_{S}k -h$. 
By the fact that $h$ is approaching infinity near $\p M_0$, any surface homologous and close to $\p M_0$ would have $\theta^+<0$.  Thus, without loss of generality,  one can take a subset of $M_0$, with an abuse of notation, still denoted by $(M_0, g, \hat k)$ as an initial data set with a smooth strictly outer trapped boundary.  Then by Theorem 3.7 in \cite{LLU21} , we can further assume $(M_0,g,\hat k)$ is of harmonic asymptotics and strict dominant energy condition.  Then, by the analysis in \cite{E10} Section 3 and \cite{E3} Section 2 (cf. \cite{SY2} Section 4), one can solve for the Jang equation with blow up whose solution is with a graph with an asymptotically flat end with the ADM energy same as $(M_0,g)$ and have a strictly positive scalar curvature in the strong spectral sense (see \cite{HSY} Definition 1.6, c.f.  \cite{BW1} Definition 1.2).  Thus, by \cite{BW1} Theorem 1.4, $E(g)\geq 0$.  While if there is only one asymptotically flat end, one can see $U_0=M$ and use Theorem 18 in \cite{EHLS} for perturbation. Result then follows as aforementioned.  
\end{proof} 

\begin{rem}
By \cite{E10} Lemma 2.1, local boundedness of the graph of the limit of the regularised Jang equation implies local uniform $C^1$ bound of the solution to the regualrised Jang equation and by elliptic regularity, we obtain a local uniform $C^3$ bound. Then by \cite{E10} Lemma 2.3 (cf. \cite{SY2} Proposition 2), the graphical component of the solution to the Jang equation is regular and complete.  
\end{rem}

The proof above also implies the following. 
\begin{cor} \label{PET2} 
Let $(M^n,g, k)$, $n \ge 3$, be an asymptotically flat initial data set of Sobolev type with boundary composed of either weakly outer or inner trapped surfaces satisfying the dominant energy condition with only one end $\mathcal E$.  
The ADM energy $E$ is non-negative.  
\end{cor}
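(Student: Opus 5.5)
We reduce Corollary \ref{PET2} to the machinery in the proof of Theorem \ref{QST2}: modify $k$ near $\partial M$ so that the boundary becomes \emph{strictly} outer trapped, then run the Jang equation argument unchanged.

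\textbf{Step 1: reduce to weakly outer trapped boundary components.} Let $\Sigma\subset\partial M$ be a component. If $\Sigma$ is weakly outer trapped, $\theta_+=H+\tr_\Sigma k\le 0$, there is nothing to do. If instead $\Sigma$ is weakly inner trapped, $\theta_-=H-\tr_\Sigma k\le0$, we pass to $(M,g,-k)$. This preserves $\mu$ and $|J|$, hence the dominant energy condition, and it does not change the ADM energy $E$, since $E$ depends only on $g$. The sign flip exchanges $\theta_+$ and $\theta_-$ on that component. Flipping the sign of $k$ is a global operation, so the following localization is needed when components of both types occur. On a collar of the inner-trapped components we interpolate, using the function $h$ of Proposition \ref{newDEC} supported near the boundary, so that the outer expansion becomes negative. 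Alternatively, we simply observe that the Jang blow-up argument of \cite{E3, SY2} works with either sign of blow-up: it blows up at $+\infty$ over outer trapped and $-\infty$ over inner trapped components. I would adopt the latter as the main route.

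\textbf{Step 2: strict trapping and the DEC collar.} Take $h\ge0$ supported in a thin collar of $\partial M$, with $h>0$ on $\partial M$ and $h=0$ outside the collar, and set $\hat k=k\mp\frac{h}{n-1}g$, with the sign matching each boundary type. By Proposition \ref{newDEC},
\[
\hat\mu-|\hat J|\ \ge\ \mu-|J|+\tfrac{n}{2(n-1)}h^2-h\,|\tr_g k|-|\nabla h|.
\]
Here lies the main obstacle. Without the hypothesis $\tr_g k\le0$ of Theorem \ref{QST2}, the cross term $h\tr_gk$ has no sign, and $|\nabla h|$ must be controlled. I would first try taking $h$ small, of size $\epsilon$, and using instead the freedom of perturbing the data. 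By the density result (Theorem 3.7 of \cite{LLU21}, and \cite{EHLS} Theorem 18) we may assume strict DEC, $\mu-|J|\ge c>0$ on a compact neighbourhood of $\partial M$, at the cost of an arbitrarily small change in $E$. Then $h=\epsilon\phi$ with a fixed cutoff $\phi$ preserves DEC for $\epsilon$ small. It also makes $\theta_+=H+\tr_\Sigma k-\epsilon\phi\cdot\tfrac{n-1}{n-1}<0$ strictly, with the analogous statement for $\theta_-$.

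\textbf{Step 3: conclude.} We now have a one-ended asymptotically flat initial data set, of harmonic asymptotics after perturbation, satisfying strict DEC, with strictly trapped boundary. Exactly as in the proof of Theorem \ref{QST2}, we solve the Jang equation with blow-up at the boundary (\cite{E10} Section 3, \cite{E3} Section 2). The graphical component is a complete manifold whose asymptotically flat end has the same ADM energy and whose scalar curvature is positive in the strong spectral sense. Theorem 1.4 of \cite{BW1} then gives $E\ge -\epsilon'$ for every $\epsilon'>0$, and hence $E\ge0$.
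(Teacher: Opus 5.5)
Your proposal is correct and follows essentially the paper's route. The paper obtains the corollary by rerunning the proof of Theorem \ref{QST2} with the given weakly trapped boundary in place of the $h$-generated one: density via \cite{LLU21} Theorem 3.7 to harmonic asymptotics and strict DEC, then the Jang equation with blow-up at the boundary (\cite{E10, E3}), then \cite{BW1} Theorem 1.4 on the spectrally positive Jang graph. One point to make explicit is that it is the \cite{LLU21} density theorem, not \cite{EHLS} Theorem 18 (which does not handle boundary), that preserves the weak trapping of each boundary component; this is what lets your small $\epsilon\phi$ make $\theta_\pm$ strictly negative without competing against the perturbation error.
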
 

In turn, it implies the following. 
\begin{cor} \label{PET3} 
Let $(M^n,g, k)$, $n \ge 3$, be an asymptotically flat initial data set of Sobolev type with multiple ends, all are asymptotically flat.   
The ADM energy of each end is non-negative.  
\end{cor}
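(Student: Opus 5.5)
Fix an end $\mathcal E_1$ of $(M,g,k)$; by assumption every end is asymptotically flat. The plan is to reduce to Corollary \ref{PET2}: we excise all ends other than $\mathcal E_1$ along suitable trapped hypersurfaces, leaving an initial data set with one asymptotically flat end and weakly trapped boundary. Equivalently, one can apply Theorem \ref{QST2} with $U_0=M$. There every other end lies at infinite distance, so $D_0$ can be taken arbitrarily large and the largeness assumption \eqref{energylargeness} is satisfied in the limit; the remaining task is to justify the needed positivity. I will follow the first route, since it uses only the dominant energy condition.

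\textbf{Step 1 (trapped barriers in the other ends).} In each end $\mathcal E_j$ with $j\neq 1$, take a large coordinate sphere $S_j=\{|x|=r\}$. With mean curvature computed toward $\mathcal E_1$, i.e.\ toward the interior of $M$ and away from the infinity of $\mathcal E_j$, we have $H=-\frac{n-1}{r}+O(r^{-1-\tau})$. Since $k=O(r^{-1-\tau})$ by the Sobolev-type decay, $\theta_\pm = H\pm \tr_{S_j}k = -\frac{n-1}{r}+o(r^{-1})<0$ for $r$ large. Hence $S_j$ is both strictly outer and strictly inner trapped with respect to $\mathcal E_1$.

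\textbf{Step 2 (excision and application).} Let $M'$ be the component of $M$, cut along $\bigcup_{j\ne1}S_j$, that contains $\mathcal E_1$. Then $M'$ has the single asymptotically flat end $\mathcal E_1$, satisfies the dominant energy condition, and its boundary consists of weakly outer (indeed weakly inner) trapped surfaces. If there are infinitely many ends, the compactness of the core in the Sobolev-type definition rules this out, so only finitely many $S_j$ are needed. Corollary \ref{PET2} then gives $E(\mathcal E_1)\ge0$, and since $\mathcal E_1$ was arbitrary the conclusion holds for every end.

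\textbf{Main obstacle.} The difficulty is compatibility with the proof of Corollary \ref{PET2}, which runs the argument for Theorem \ref{QST2}. That argument uses the trapped boundary as a barrier so the Jang equation blows up correctly near the boundary, and uses the density theorem of \cite{LLU21} (Theorem 3.7) to obtain harmonic asymptotics and the strict dominant energy condition while preserving the trapped condition. Strict trapping of $S_j$ is an open condition, so it survives the small perturbations; the point to verify is that the perturbations of \cite{LLU21} and \cite{EHLS} can be localized near $\mathcal E_1$ (or are $C^1$-small near $\partial M'$). If this fails, the fallback is to apply Theorem \ref{QST2} directly with $h$ blowing up inside $\mathcal E_j$, so that $\partial M_0$ is created as in the proof above, where $\theta^+\to-\infty$.
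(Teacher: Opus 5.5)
Your argument is the paper's own proof: large coordinate spheres in the ends other than the fixed one have $\theta_\pm<0$ with respect to it, and excising those ends leaves a one-ended asymptotically flat initial data set with trapped boundary to which Corollary \ref{PET2} applies. Two small remarks: like the paper, you use the dominant energy condition even though the statement omits it; and your aside about applying Theorem \ref{QST2} with $U_0=M$ would not work as stated, both because $\overline{M\setminus\mathcal E_1}$ is not compact and because the dominant energy condition alone does not give the strict largeness bound.
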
 
\begin{proof}
Say $(M,g,k)$ has $l$ ends.  Fix one of them, say $N_1$, then large coordinate spheres on other ends $N_2,...,N_l$ would be smooth outer trapped surfaces with respect to $N_1$, then Corollary \ref{PET2} can be applied to $N_1$.  
\end{proof}

\section{Shielding in the interior and on the boundary}\label{shield}
\begin{proof}[Proof of Theorem \ref{QST}]Let $(M,g)$ be an asymptotically hyperbolic manifold satisfying the assumption in Theorem 1.1. Assume on the contrary that its energy momentum vector $\m$ is neither causal future pointing nor vanishing. As discussed in Section \ref{PIDS}, we can construct an asymptotically flat initial data set $(\Check M, \Check g, \Check k)$ satisfying the dominant energy condition as in the last step we pick $\rho_0$ sufficiently large such that $\overline{(U_0\setminus U_2)} \cap \{\rho>\frac{1}{2}\rho_0\}=\varnothing$. Note that along the construction, there are 2 copies of $U_0\setminus U_2$ in $\Check M$, yet we here notate their union also by $U_0\setminus U_2$.

By construction, On $U_1\setminus U_2$, we have 
\be 
2(\Check \mu- |\Check J|_{\Check g})= R_g+\frac{n(n-1)}{a}> R_g+n(n-1)>\frac{4}{D_0D_1}. 
\ee 
And on $(U_1\setminus U_2) \cap \{\rho<9\rho_0\} $,  we obviously have, 
\be 
2(\Check \mu- |\Check J|_{\Check g})= R_g+\frac{n(n-1)}{a}> 0.  
\ee 
Moreover, on $U_0\setminus \overline{U_2}$, $tr_{\Check g} \Check k <0$.  

Thus, one can use Section \ref{AF} to construct a function $h$ and an auxilary asymptotically flat initial data set with strictly outer trapped boundary $(M_0,\hat g:=\Check g,\hat k:=\Check k-\frac{h}{n-1}\Check g)$ on which we solve the Jang equation. Let $(M_{G}, g_{G})$ denote the Jang graph component with an asymptotically flat end with $E(g_{G})=E(\hat g)=0$.  By construction as aforementioned, with equation (11) in \cite{E3} (cf. \cite{SY2} P.256),  as in the notation \cite{HSY} Definition 1.6, we have that $(M_{G}, g_{G})$ is a complete manifold with $\frac{1}{2}$-scalar curvature no less than $\phi:=2(\hat\mu-|\hat J|_{\hat g})$ in the strong spectral sense with $\phi\geq 0$ and strictly positive somewhere.  The proof of Theorem 1.8 (1) (in particular Lemma 4.3) in \cite{HSY} utilised torical symmetrisation (\cite{FCS80, SY7}) to reduce the situation to a positive mass theorem with pointwise non-negative scalar curvature. By \cite{LLU22} Theorem 1.3 and \cite{BW1} Corollary 1.5, one can see that \cite{HSY} Theorem 1.8 also true for dimensions greater than or equal to 3.  In particular,  since $\phi>0$ somewhere, we know that $E(g_{G})>0$, contradiction arises. 

\begin{comment}
Note that first of all, a large coordinate sphere in the Euclidean end has $\theta^+=H_{g_{Euc}}>0$. While for any surface $S$ near $\p M_0$,  $\theta^+=\hat H+tr_{S} \hat k= H_g + tr_{S}k -h$. 
By the fact that $h$ is approaching infinity near $\p M_0$, any surface homologous and close to  $\p M_0$ would have $\theta^+<0$. 

Therefore, by \cite{E10, AM}, there exists a separating MOTS $\S$ in $M_0$, denote the non-compact component containing the Euclidean end by $M_1$. Therefore, $(M_1, \h g, \h k)$ is an asymptotically flat initial data set satisfying DEC with a Euclidean end and MOTS boundary $\S$. It is contradicting \cite{LLU21} Section 4.3, the rigidity of the spacetime positive mass theorem with boundary with zero ADM energy. The proof of Theorem \ref{QST} is complete.  
\end{comment}
\end{proof}

\begin{proof}[Proof of Theorem \ref{CZ2}]
Let $(M,g)$ be an asymptotically hyperbolic manifold satisfying the assumption in Theorem \ref{CZ2}. Assume on the contrary that its energy momentum vector $\m$ is neither causal future pointing nor vanishing. We now only have to fine tune the last step from the proof above. In particular after the modification of $(M,g)$ and regard it as an initial data set $(\Check M, \Check g, \Check k)$. Following \cite{LLU22} Section 6, one can construct $h$ with a more delicate choice of parameter to force $\p M$ to be a strictly outer trapper surface for the asymptotically flat initial data set $(\h M:=\Check M, \h g:= \Check g, \hat k:=\Check k-\frac{h}{n-1}g)$ which satisfies DEC and contains a Euclidean end. Note that in the proof of Theorem \ref{WANG}, the conformal factor $u$ can be chosen to be as close to 1 as desired. Hence, from the formula of the mean curvature after conformal change 
\begin{equation}
H_{u^{\frac{4}{n-2}}g}=u^{\frac{-2}{n-2}}\left(H_g+\frac{2(n-1)}{n-2}\frac{\p_{\nu} u}{u}\right) , 
\end{equation}
the strict upper bound of mean curvature can still be preserved after approximation. Again, it is contradicting \cite{LLU22} Section 4.3. The proof is thus complete. 
\end{proof}

\section{Completeness and inextendibility}\label{complete}
\begin{proof}[Proof of Theorem \ref{main}]
Let $(M,g)$ be a complete manifold with an asymptotically hyperbolic end $\mE$ satisfying $R_g\geq -n(n-1)$. We are going to prove Theorem \ref{main} by contradiction. Assume on the contrary that its energy momentum vector $\m$ is neither causal future pointing nor vanishing. 
We pick an arbitrarily large $\rho_0$ at the last step of the deformation of $(M,g)$ into $(\Check M, \Check g, \Check k)$ carried out in Section \ref{PIDS}. Note that along the deformation, $\Check g$ is still complete since it is only altering the infinity of the asymptotically hyperbolic end.

Now, on $W:=\{\rho\leq \rho_0\}$, we have 
\be 
\begin{split}
\Check \mu- |\Check J|= & R_g+\frac{n(n-1)}{a} \\
\geq & R_g+n(n-1)+\frac{n(n-1)}{a}-n(n-1)\\
\geq & n(n-1)(\frac{1-a}{a})=:\gamma>0. 
\end{split}
\ee 

If $W$ is compact, then we already have classical positive mass theorem for asymptotically hyperbolic manifolds. Hence, here we assume that it is non-compact. Now, one can set $U_1=\{\rho > \rho_0-1\}$ and $U_2=\{\rho > \rho_0\}$. Let $U_0$ be a set containing $\overline{U_1}$ large enough such that $D_0$ satisfies 
\be 
\begin{split}
\gamma>\frac{4}{D_0D_1}. 
\end{split}
\ee 
Note that it is legitimate since $M$ is complete. Theorem \ref{main} now follows from the proof of Theorem \ref{QST} in Section \ref{shield}. 
\end{proof}

Now, we are ready to show the inextendibility result. 

\begin{proof}[Proof of Corollary \ref{CZ}]
For a compactly supported function $f$, as discussed in Appendix \ref{Density}, we can solve the following equation for large $\l$ and sufficiently small $\tau>0$, 
\[-\frac{4(n-1)}{n-2}\Delta_{g_{\lambda}} u_\tau + R_{g_\l} u_{\tau}=-n(n-1) u_{\tau}^{\frac{n+2}{n-2}}+(\chi_{\lambda}(R_g+n(n-1))+\tau f).\]
Then the scalar curvature of $\tilde g_{\tau}=u_{\tau}^{\frac{4}{n-2}}g_{\l}$ satisfies 
\[R(\tilde g_\tau)+n(n-1)=(\chi_{\lambda}(R_g+n(n-1))+\tau f)u^{-\frac{n+2}{n-2}}_\tau.\] 

Pick $\l$ sufficiently large and $\tau$ sufficiently small that $\m(\tilde g_{\tau},\mE)$ is still spacelike or past-pointing. Then we know that the hypotheses of Theorem \ref{QST} are violated in $N_D^{\tilde g}(\mathcal E)$ for $\tilde g_{\tau}$, where $D$ is such that \eqref{largeness} is satisfied on some annular region where $f>0$. Moreover, for $\tau$ small, $u_\tau$ is close to 1. Note that $\tilde g_{\tau}$ does not generate any new points violating the dominant energy condition or incompleteness, so one of the hypotheses must first be violated for $g$ in $N^{\tilde g}_D(\mathcal E)$. Furthermore,  $g$ and $\tilde g$ are uniformly equivalent, $N_D^{\tilde g}(\mathcal E)\subset N^g_{D'}(\mathcal E)$ for some $D'$ close to $D$. 
\end{proof}

\section{Asymptotically locally hyperbolic ends}\label{ALH}
Now, we can consider a Riemannian manifold with an asymptotically locally hyperbolic end $\mE$ which has topology of $[0,1]\times \mathbb{S}^{n-1}/\Gamma$, where $\Gamma$ is a finite subgroup of $SO(n-1)$. Then by considering its universal cover as in \cite{CG21}, one can obtain the following result. 

\begin{cor}\label{QSTALH} 
Let $(M^n,g)$, $n\ge 4$, be an asymptotically locally hyperbolic manifold of H\"older type $(\a,\d)$ where $\a\in(0,1)$ and $\delta>\frac{n}{2}$. Let $U_0$, $U_1$, and $U_2$ be neighborhoods of an asymptotically locally hyperbolic end $\mathcal E$ with topology $[0,1]\times \mathbb{S}^{n-1}/\Gamma$ such that  $\overline{U_2}\subset U_1$, $\overline{U_1}\subset U_0$, and $\overline {U_0 \setminus \mathcal E}$ is compact,
and let 
$$D_0=\dist_g(\partial U_0, U_1)\quad\text{and}\quad
D_1 =\dist_g(U_2,\partial U_1).$$
If the following hold:
\begin{enumerate}
    \item $g$ has no points of incompleteness in $U_0$, 
    
    \item $R_g\ge -n(n-1)$ on $U_0$, and 
    
    \item \label{scalar_boundALH} the scalar curvature satisfies the largeness assumption 
    \begin{equation}\label{largenessALH}
R_g+n(n-1)>\frac{4}{D_0 D_1}\quad \text{on}\quad\overline{U_1}\setminus U_2,
    \end{equation}
\end{enumerate}
then the mass of $\mathcal E$ is non-negative. 
\end{cor}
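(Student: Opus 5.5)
We plan to reduce Corollary \ref{QSTALH} to Theorem \ref{QST} by lifting to a cover on which the end becomes genuinely asymptotically hyperbolic, as in \cite{CG21}. The end $\mE\cong[r_0,\infty)\times\mS^{n-1}/\Gamma$ is covered by $[r_0,\infty)\times\mS^{n-1}$, and this $|\Gamma|$-sheeted cover is where the lifted metric should look like $\gH$ near infinity.

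\textbf{Step 1: the covering space.} We cannot in general lift the whole of $M$, since $\pi_1(M)$ need not surject onto $\Gamma$ compatibly. Instead we work only with $U_0$, which is all Theorem \ref{QST} uses. We would first shrink $U_0$ if necessary (keeping $D_0$ and the other hypotheses) so that the inclusion $\mE\hookrightarrow U_0$ induces a map on $\pi_1$ whose composition with the end's holonomy makes sense. Failing that, we replace $U_0$ by a suitable set built by cutting along the end, namely the component structure used in \cite{CG21}. We then take the covering $\widetilde{U_0}\to U_0$ associated to the kernel of $\pi_1(U_0)\to\Gamma$, which restricts over $\mE$ to the universal cover of the end. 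If $\pi_1(U_0)\to\Gamma$ is not well defined, the plan is to pass to the cover of $U_0$ classified by the end's holonomy after capping; this is the step we expect to be the main obstacle. The fallback is to note that Theorem \ref{QST} only needs $U_0$ as an incomplete manifold with no points of incompleteness relative to its interior. We may then take any cover of $U_0$ in which $\mE$ lifts to a copy of $\HH^n\setminus B_{r_0}$; the compactness of $\overline{\widetilde U_0\setminus\tilde\mE}$ may fail, in which case we keep only one lifted end and work with the corresponding sheet structure.

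\textbf{Step 2: verify the hypotheses upstairs.} The pulled-back metric $\tg$ is locally isometric to $g$, so the following hold:
\begin{itemize}
\item $R_{\tg}\ge -n(n-1)$.
\item The largeness condition \eqref{largenessALH} holds on the preimage of $\overline{U_1}\setminus U_2$.
\item There are no points of incompleteness, since Cauchy sequences project to Cauchy sequences and covering maps are local isometries with path lifting.
\end{itemize}
For the distances, lifting paths shows $\dist_{\tg}(\p\widetilde U_0,\widetilde U_1)\ge D_0$ and $\dist_{\tg}(\widetilde U_2,\p\widetilde U_1)\ge D_1$, because any path upstairs projects to a path downstairs of the same length. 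Hence \eqref{largeness} holds upstairs with possibly larger $\tilde D_0\tilde D_1$. The lifted end satisfies $\Phi_*\tg-\gH\in C^{2,\a}_\d$ and \eqref{decay2}, so it is asymptotically hyperbolic of H\"older type $(\a,\d)$.

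\textbf{Step 3: compare masses.} The mass of the ALH end is defined in \cite{HJ} as the mass functional against the static potential $V_{(0)}=\cosh r$, which is $\Gamma$-invariant; the other $V_{(i)}$ do not descend. The mass integral upstairs over $\mS_r$ is $|\Gamma|$ times the integral over $\mS_r/\Gamma$, so $m_0(\tilde\mE)=|\Gamma|\,m(\mE)$. We also use the $\Gamma$-invariance of $\tg$ on the lifted end. Averaging $m_i=H(V_{(i)})$ over $\Gamma$ gives $\sum_{\sigma\in\Gamma}\sigma\cdot\vec m=0$. Since $\Gamma\subset SO(n-1)$ is finite, this forces $\vec m$ to lie in the fixed subspace of $\Gamma$; this subspace may be nonzero, but that does not matter for the sign conclusion below.

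\textbf{Step 4: conclude.} Theorem \ref{QST} applied upstairs gives that $(m_0,\vec m)$ is future causal or zero, so $m_0\ge|\vec m|\ge0$. Therefore $m(\mE)=m_0/|\Gamma|\ge0$.

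Within this argument, the main obstacle is the construction of a cover with a single asymptotically hyperbolic end and a compact core, which Step 1 only addresses through a fallback. A secondary concern is whether the density and gluing steps of Section \ref{PIDS} hold on the cover. They do, since they are applied upstairs to an honest AH end.
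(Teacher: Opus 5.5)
Your route is the same as the paper's. Section~\ref{ALH} just passes to the universal cover ``as in \cite{CG21}'' and applies Theorem~\ref{QST} there. Your Steps 2--4 are fine: local isometry preserves $R_g$ and the largeness bound, projected paths give $\tilde D_0\tilde D_1\ge D_0D_1$, the lifted end is asymptotically hyperbolic of the same type, and $m_0(\tilde\mE)=|\Gamma|\,m(\mE)$. (In Step 3, $\Gamma$-invariance actually gives $\sigma\vec m=\vec m$ for all $\sigma\in\Gamma$, so $\vec m=0$ because $\Gamma$ acts freely on $\mS^{n-1}$; only $m_0\ge 0$ is needed anyway.)

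\textbf{Step 1 is a genuine gap, and none of your fallbacks closes it.} For any covering $p\colon\widetilde U\to U_0$, each component of $p^{-1}(\mE)$ is $[r_0,\infty)\times\mS^{n-1}/\Gamma'$ for a subgroup $\Gamma'\supseteq K:=\ker\bigl(\pi_1(\mE)\to\pi_1(U_0)\bigr)$. So $\mE$ lifts to a copy of $\HH^n\setminus B_{r_0}$ in some cover if and only if $K=1$, and then the universal cover already does it. If $K\neq 1$, no cover of $U_0$ does, whether ``capped'', ``cut along the end'' or otherwise. Shrinking $U_0$ does not help either: it can only decrease $D_0$, which can destroy \eqref{largenessALH}, and it changes nothing if $\pi_1(\mE)\to\pi_1(U_1)$ is already non-injective. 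Your argument therefore needs injectivity of $\pi_1(\mE)$ into the fundamental group of the manifold being covered as an extra input. The paper's one-line appeal to the universal cover presupposes exactly this, and it does not follow from hypotheses (1)--(3). This is not a technicality. The static slices of the Eguchi--Hanson--AdS solitons of Clarkson--Mann ($n=4$) are complete and simply connected, with $R_g\equiv-12$, conformal infinity $\mS^3/\mathbb{Z}_p$ ($p\ge 3$), and negative mass. So without a topological hypothesis of this kind, the ALH conclusion fails, at least in the complete form of Theorem~\ref{mainALH}.

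\textbf{A second part of Step 1 is also unresolved, even granting injectivity.} The universal cover of $U_0$ may be infinite-sheeted. Then $p^{-1}(U_0)$ contains infinitely many lifted ends and $\overline{p^{-1}(U_0)\setminus\tilde\mE}$ is not compact, which violates a hypothesis of Theorem~\ref{QST}. Keeping ``only one lifted end'' does not obviously help: $\overline{\widetilde U_1}\setminus\widetilde U_2$ may then contain other lifts of $U_2$, where \eqref{largenessALH} gives no information.

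\textbf{How to repair it.} Assume, or prove, that there is a homomorphism $\phi\colon\pi_1(U_0)\to\Gamma$ restricting to the identity on $\pi_1(\mE)=\Gamma$. The $|\Gamma|$-sheeted cover associated to $\ker\phi$ then has exactly one lifted end, namely the universal cover $[r_0,\infty)\times\mS^{n-1}$ of $\mE$. Its core has compact closure, and your Steps 2--4 go through verbatim with $\widetilde U_i=p^{-1}(U_i)$.
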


\begin{thm}\label{mainALH}
Let $(M^n,g)$, $n \ge 4$, be a complete manifold with $R_g\geq -n(n-1)$, and suppose that it has at least one asymptotically locally hyperbolic end $\mathcal E$ of H\"older type $(\a,\d)$, where $\a\in(0,1)$ and $\delta>\frac{n}{2}$. Then the mass of $\mathcal E$ is non-negative. 
\end{thm}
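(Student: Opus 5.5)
We argue by contradiction, following the proof of Theorem \ref{main} and using Corollary \ref{QSTALH} in place of Theorem \ref{QST}. Suppose that $(M,g)$ is complete with $R_g\ge -n(n-1)$ and that the asymptotically locally hyperbolic end $\mE\cong[0,1]\times\mathbb{S}^{n-1}/\Gamma$ has negative mass. We then build the same shielding configuration as in Section \ref{complete}. Choose $\rho_0$ large and set $U_1=\{\rho>\rho_0-1\}$ and $U_2=\{\rho>\rho_0\}$, so that $D_1\ge 1$. Since $M$ is complete, we may then take $U_0\supset\overline{U_1}$ with $\overline{U_0\setminus\mE}$ compact and $D_0$ as large as we wish. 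Hypothesis (1) of Corollary \ref{QSTALH} then holds automatically, and hypothesis (2) holds by assumption.

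The issue is hypothesis (3): the largeness condition $R_g+n(n-1)>\frac{4}{D_0D_1}$ on $\overline{U_1}\setminus U_2$ need not hold for the original $g$. We therefore do not apply Corollary \ref{QSTALH} verbatim, but rerun its proof. That proof passes, via the universal cover as in \cite{CG21}, to the construction of Section \ref{PIDS}. There the end is first brought to Wang-type asymptotics with negative constant mass aspect. Theorem \ref{ACGdeformation} then makes the end exactly (locally) hyperbolic with $R\ge -\frac{n(n-1)}{a}$ and $a=a(\rho_0)<1$. Finally we patch to an asymptotically flat initial data set $(\Check M,\Check g,\Check k=-\Check g/\sqrt a)$ with $E=0$.

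Exactly as in the proof of Theorem \ref{main}, on $W=\{\rho\le\rho_0\}$ we get
\[
\Check\mu-|\Check J|\ge n(n-1)\tfrac{1-a}{a}=:\gamma>0 .
\]
We fix $\rho_0$, hence $\gamma$, first, and then choose $D_0$ with $\gamma>\frac{4}{D_0D_1}$. With this choice the largeness assumption holds for the deformed data. Moreover $\operatorname{tr}\Check k<0$ everywhere. Hence the machinery of Section \ref{AF} applies. We construct $h$ and the data set $(M_0,\Check g,\Check k-\frac{h}{n-1}\Check g)$, which has strictly outer trapped boundary and satisfies DEC, strictly on an open set. Solving the Jang equation gives a complete asymptotically flat graph with $E=0$ and positive scalar curvature in the strong spectral sense. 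By \cite{HSY} Theorem 1.8 (extended via \cite{BW1}), this forces $E>0$, which is a contradiction.

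\textbf{Main obstacle: the locally hyperbolic step.} We must make sure that every step of Section \ref{PIDS} survives the quotient by $\Gamma$.

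\emph{Maskit gluing.} The Maskit gluing is only needed to handle the spacelike case. In the ALH setting the mass is a scalar, so only the case ``negative mass'' occurs, and this gluing can be skipped.

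\emph{Constant mass aspect function.} The perturbation of \cite{CGNP18} to constant mass aspect function should be performed $\Gamma$-equivariantly on the cover, or directly on the quotient as in \cite{CG21}. This preserves negativity of the scalar mass.

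\emph{Patching to Minkowski.} The \cite{ACG} deformation produces an end isometric to a $\Gamma$-quotient of a hyperboloid region. Patching to a Minkowski slice must therefore be done $\Gamma$-invariantly, yielding an asymptotically flat end modelled on $(\R^n\setminus B)/\Gamma$.

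\emph{Conclusion on the quotient.} I would first try to run Sections \ref{AF} and \ref{shield} directly on this asymptotically locally flat quotient, since the Jang and spectral arguments are local away from infinity. Failing that, I would pass to the $|\Gamma|$-fold cover of the end region only, where completeness and the shielding sets lift, and conclude there.
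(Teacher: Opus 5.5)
\textbf{There is a genuine gap at the final step, and neither of your two options closes it.}

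\emph{Working on the quotient.} If you run Section \ref{PIDS} $\Gamma$-equivariantly, the flat end you produce is modelled on $(\R^n\setminus B)/\Gamma$. The contradiction you then invoke is: $E=0$ and $\phi\ge0$ with $\phi>0$ somewhere force $E>0$, via \cite{HSY} and \cite{BW1}. That is a positive mass statement for ends diffeomorphic to $\R^n\setminus B$, and it is false for ALE ends. The Eguchi--Hanson metric is Ricci flat and non-flat with zero mass. LeBrun's scalar-flat K\"ahler ALE metrics on $\mathcal{O}(-k)$, $k\ge 3$, have negative mass. So the Jang and spectral arguments are not ``local away from infinity'': the dimension descent and the rigidity depend on the topology of the end.

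\emph{Lifting only the end region.} The $|\Gamma|$-fold cover of the end region alone has a compact boundary at bounded distance from $U_1$. The margin $\gamma=n(n-1)\frac{1-a}{a}$ tends to $0$ as $\rho_0\to\infty$. The proof of Theorem \ref{main} gets $\gamma>\frac{4}{D_0D_1}$ only by choosing $D_0$ large \emph{after} fixing $\rho_0$. That uses completeness deep inside the core, which is exactly what you lose. Completeness does not pass to a cover of a subregion. Moreover, a cover of $U_0$ that restricts to the universal cover on $\mE$ exists only if $\Gamma$ injects into $\pi_1(U_0)$.

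\textbf{The paper's route.} The paper lifts first: it passes to the universal cover of $M$, as in \cite{CG21}. Completeness and $R\ge -n(n-1)$ are inherited. The components of the preimage of $\mE$ are genuine asymptotically hyperbolic ends with $\mS^{n-1}$ at infinity. Their energy component is a positive multiple of the ALH mass; the spatial components are $\Gamma$-invariant, hence zero, since $\Gamma$ acts freely. Theorem \ref{main} then applies as a black box. No equivariant version of Section \ref{PIDS} is needed, and the final positive mass argument runs on an honest $\R^n\setminus B$ end.

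\textbf{A caveat for both routes.} This lifting requires $\Gamma\to\pi_1(M)$ to be injective, so that $\mE$ actually unwraps in the cover; the paper's one-line appeal to \cite{CG21} leaves this implicit. Some such topological input appears unavoidable. As I recall them, the time-symmetric slices of the AdS Eguchi--Hanson solitons of Clarkson--Mann ($n=4$) are complete and simply connected, satisfy $R\equiv-12$, have $\mS^3/\mathbb{Z}_p$ at infinity, and have negative mass. These are precisely the configurations an argument carried out on the quotient would need to exclude, and it cannot.
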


Correspondingly, the adaptation also applies to Theorem \ref{CZ2} and Corollary \ref{CZ}.

\appendix
\section{Proof of Theorem \ref{WANG}}\label{Density}
We follow the set up of exhaustion as in \cite{LLU22} Section 2,  in particular Lemma 2.8, there exists a smooth proper function $\rho : M \to (0,\infty)$ such that
\begin{enumerate}
    \item $\rho(x) = r$ for any $x \in \mathcal E$ and $\rho < r_0$ in $M \setminus \mathcal E$.
    \item $\rho(p_i) \to 0$ for any sequence $\{p_i\} \subset M \setminus \mathcal E$ that eventually leaves every compact set.
\end{enumerate}
Let $\sigma > 0$ be a regular value of $\rho$, we define $M_\sigma \coloneqq \{\rho \geq \sigma\}$, a smooth manifold with boundary.  As in \cite{DGS} Appendix B, for $\lambda \geq r_0$, we can define a smooth cut-off function $0 \leq \chi_\lambda \leq 1$ equal to one in $\overline{M \setminus M_\lambda}$ and zero in $M_{\lambda+1}$, and define the metric
\begin{equation}
    g_\lambda \coloneqq \chi_\lambda g + (1-\chi_\lambda) \bar g
\end{equation}
on $M$. 

Let $\Delta_{\l}$ and $R_{\l}$ denote the Laplace-Beltrami operator and the scalar curvature with respect to $g_\lambda$. To preserve a non-zero lower bound of the scalar curvature after conformal change, instead of considering conformal Laplacian as in \cite{LLU22}, we seek a solution to the following equation as in \cite{DGS} Appendix B.  

\begin{equation}
    \label{DGS}
        -a_n \Delta_{\lambda} u_{\lambda} + R_\lambda u_{\lambda} = -n(n-1) u_{\lambda}^{\frac{n+2}{n-2}} + \chi_{\l}(R_g+n(n-1)) \quad
\end{equation}
where $a_n:= 4 \tfrac{n-1}{n-2}$. 
From this, for $\tilde g_\l:=u_{\l}^{\frac{4}{n-2}}g_{\lambda}$, we have
\begin{equation}
    R_{\tilde g_{\l}}+n(n-1)=u^{-\frac{n+2}{n-2}}\chi_\lambda (R_g+n(n-1)).  
\end{equation}

Let $\l>r_0$. As in \cite{LLU22}, we first consider the following Neumann problem on $M_{\s}$ for each $\sigma\leq r_0$ being a regular value of $\rho$, 
\begin{equation}
    \label{exhaustion}
    \begin{dcases}
        -a_n \Delta_\lambda u_{\lambda,\sigma} + R_\lambda u_{\lambda,\sigma} = -n(n-1) u_{\lambda,\sigma}^{\frac{n+2}{n-2}} +  \chi_{\l}(R_g+n(n-1))  &\text{in } \; M_\sigma 
        \\
        \partial_\nu u_{\l,\sigma} = 0  &\text{on } \; \partial M_\sigma. 
    \end{dcases}
\end{equation}

As illustrated by \cite{DGS} Appendix B, the existence of solution $u_{\l,\sigma}$ established by standard monotonicity method (\cite{G}) follows from the existence of constant subsolutions and supersolutions (barriers).  And this marks the difference between Theorem 1.3 in \cite{LLU22} and Theorem \ref{WANG} here since we have to impose control on scalar curvature to get subsolutions and supersolutions, precisely $R_g+n(n-1)\geq 0$.  In particular, as shown in \cite{DGS} Appendix B, the following constant barriers suffice. 
\begin{prop}
\label{bound}
For each sufficiently small $\tau > 0$, there exists some $\lambda_0 =\lambda_0(n, \tau, \delta, \|g - \bar g\|_{C^{2,\alpha}(\mathcal E)})> r_0$ such that for all $\lambda > \lambda_0$, the constants $u^{\pm} \coloneqq 1 \pm \tau$ are barriers to \eqref{exhaustion}. In particular, $\lambda_0 \to \infty $ as $\tau\to 0$.  
\end{prop}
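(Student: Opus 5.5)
We verify the barrier conditions directly, treating the constants $u^\pm=1\pm\tau$ as supersolution/subsolution of the operator
\[
L(u):=-a_n\Delta_\lambda u+R_\lambda u+n(n-1)u^{\frac{n+2}{n-2}}-\chi_\lambda(R_g+n(n-1)).
\]
For a constant $u$ the Laplacian term vanishes and the Neumann condition holds trivially, so we need only the pointwise inequalities $L(1+\tau)\ge 0\ge L(1-\tau)$ on $M_\sigma$. Write $R_\lambda=-n(n-1)+E_\lambda$ with $E_\lambda:=R_\lambda+n(n-1)$. Then for constant $c>0$,
\[
L(c)=n(n-1)\bigl(c^{\frac{n+2}{n-2}}-c\bigr)+cE_\lambda-\chi_\lambda E_g,\qquad E_g:=R_g+n(n-1)\ge 0 .
\]
We split $M_\sigma$ into three regions by the cut-off.

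On $\overline{M\setminus M_\lambda}$ we have $\chi_\lambda=1$ and $g_\lambda=g$, so $E_\lambda=E_g$ and $L(c)=n(n-1)(c^{\frac{n+2}{n-2}}-c)+(c-1)E_g$. For $c=1+\tau$ both terms are nonnegative, since $E_g\ge0$ and $c^{\frac{n+2}{n-2}}\ge c$; for $c=1-\tau$ both are nonpositive. This is exactly where the hypothesis $R_g\ge-n(n-1)$ is used, and no smallness is required. On $M_{\lambda+1}$ we have $g_\lambda=\bar g=g_{\HH}$ and $\chi_\lambda=0$, so $E_\lambda=0$ and $L(c)=n(n-1)(c^{\frac{n+2}{n-2}}-c)$, which has the correct sign in both cases.

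The main obstacle is the transition annulus $M_\lambda\setminus M_{\lambda+1}\subset\mathcal E$. There $g_\lambda-g_{\HH}=\chi_\lambda h$, and the cut-off derivatives are bounded independently of $\lambda$. Since $h\in C^{2,\alpha}_\delta$, the expression for scalar curvature in terms of $h$ and its first two derivatives gives
\[
|E_\lambda|+|E_g|\le C(n)\,\|g-\bar g\|_{C^{2,\alpha}_\delta(\mathcal E)}\,e^{-\delta\lambda}.
\]
Here it matters that $g_{\HH}$ has bounded geometry, so the hyperbolic-weighted norms control the curvature uniformly. Meanwhile
\[
n(n-1)\bigl((1+\tau)^{\frac{n+2}{n-2}}-(1+\tau)\bigr)\ge c_n\tau,\qquad n(n-1)\bigl((1-\tau)^{\frac{n+2}{n-2}}-(1-\tau)\bigr)\le -c_n\tau
\]
for small $\tau$, with $c_n>0$; both follow from a first-order expansion with derivative $n(n-1)\cdot\frac{4}{n-2}$ at $c=1$. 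The perturbation terms are bounded by $|cE_\lambda|+|\chi_\lambda E_g|\le 3C\|h\|e^{-\delta\lambda}$. It therefore suffices to choose $\lambda_0$ with $3C\|h\|e^{-\delta\lambda_0}<c_n\tau$, that is,
\[
\lambda_0=\max\Bigl\{r_0+1,\ \delta^{-1}\log\frac{3C\|h\|}{c_n\tau}\Bigr\}.
\]
This depends only on $n,\tau,\delta$ and $\|g-\bar g\|$, and tends to $\infty$ as $\tau\to0$ (after possibly enlarging it by a harmless additive function of $\tau$ if $\|h\|=0$). For all $\lambda>\lambda_0$ the two inequalities hold on the annulus as well, so $u^\pm$ are barriers.

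Finally, we check the ordering $u^-<u^+$, which is needed for the monotone iteration. We also note that since $\sigma\le r_0<\lambda$, the annulus lies entirely inside $M_\sigma\cap\mathcal E$, so the three regions cover $M_\sigma$.
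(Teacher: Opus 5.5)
Your proposal is correct and is the argument the paper relies on: the paper gives no separate proof and defers to \cite{DGS} Appendix B, stressing only that $R_g+n(n-1)\ge 0$ is needed. As in your proof, the constant barriers $1\pm\tau$ are checked region by region: $R_g+n(n-1)\ge0$ handles the region where $\chi_\lambda\equiv 1$, exact hyperbolicity handles the region where $\chi_\lambda\equiv 0$, and on the transition annulus the $e^{-\delta\lambda}$ decay is absorbed by the gap $\frac{4n(n-1)}{n-2}\tau+O(\tau^2)$ (one cosmetic point: the bound $|R_\lambda+n(n-1)|\le C\|h\|e^{-\delta\lambda}$ is linear in $\|h\|$ only once $\|h\|e^{-\delta\lambda}$ is small, which your choice of $\lambda_0$ already ensures).
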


A crucial observation is that $\lambda_0$ and $\tau$ are independent of $\sigma$ since the barriers only concerns the asymptotic behaviour on $\mE$.  Thus $\|u_{\l,\sigma}\|_{C^0(M_{\sigma})}\leq 1+\tau $,  independent of $\sigma$. To obtain a global solution, one wants to extract a subsequential limit and thus we have to establish a uniform bound. 

\begin{prop}\label{DNM}
If $\Omega'\subset\subset\Omega\subset M_{\sigma}$,  let $u_{\l,\sigma}$ be solution to \eqref{exhaustion}, then there exists $\beta=\beta(\Omega,\Omega')\in(0,\alpha]$ such that
\begin{equation}\label{Holder}
||u_{\l,\sigma}||_{C^{2,\b}(\Omega')}\leq C(\Omega,\Omega')\left( ||u_{\l,\sigma}||_{C^{0}(\Omega)} + ||\chi_{\l}(R_g+n(n-1))||_{C^{0,\b}(\Omega)} \right).  
\end{equation}
\end{prop}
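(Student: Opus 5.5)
The estimate is an interior Schauder estimate for a semilinear equation whose nonlinearity is already under control, so I would treat \eqref{exhaustion} as a linear equation with a bounded right-hand side and bootstrap. Write the equation as
\[
L_\lambda u := -a_n \Delta_\lambda u = F, \qquad F:= -R_\lambda u_{\lambda,\sigma} - n(n-1) u_{\lambda,\sigma}^{\frac{n+2}{n-2}} + \chi_\lambda(R_g+n(n-1)).
\]
On $\Omega\subset M_\sigma$ the metric $g_\lambda$ is smooth with uniformly bounded geometry on the compact closure, so $L_\lambda$ is uniformly elliptic with $C^{0,\alpha}$ (indeed smooth) coefficients. By Proposition \ref{bound} we also have $1-\tau\le u_{\lambda,\sigma}\le 1+\tau$. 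Hence the nonlinear term $u^{\frac{n+2}{n-2}}$ is a smooth function of $u$ on a range bounded away from $0$. In particular it is Lipschitz there, with constant depending only on $n$ and $\tau$, and it is bounded by a constant multiple of $\|u\|_{C^0}$ because $u\ge 1-\tau>0$.

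The argument would run in three steps.

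First, I would pick an intermediate domain $\Omega'\subset\subset\Omega''\subset\subset\Omega$ and apply the interior $W^{2,p}$ (Calder\'on--Zygmund) estimate for large $p$:
\[
\|u\|_{W^{2,p}(\Omega'')}\le C\bigl(\|u\|_{L^p(\Omega)}+\|F\|_{L^p(\Omega)}\bigr).
\]
Since $|F|\le C(\|u\|_{C^0(\Omega)}+\|\chi_\lambda(R_g+n(n-1))\|_{C^0(\Omega)})$, Morrey's embedding then gives
\[
\|u\|_{C^{1,\beta}(\Omega'')}\le C\bigl(\|u\|_{C^0(\Omega)}+\|\chi_\lambda(R_g+n(n-1))\|_{C^{0}(\Omega)}\bigr)
\]
for some $\beta\le\alpha$.

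Second, I would estimate the H\"older norm of $F$. Using the Lipschitz bound on the nonlinearity and the smoothness of $R_\lambda$,
\[
\|F\|_{C^{0,\beta}(\Omega'')}\le C\bigl(\|u\|_{C^{0,\beta}(\Omega'')}+\|\chi_\lambda(R_g+n(n-1))\|_{C^{0,\beta}(\Omega)}\bigr).
\]

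Third, the interior Schauder estimate on $\Omega'\subset\subset\Omega''$ gives
\[
\|u\|_{C^{2,\beta}(\Omega')}\le C\bigl(\|u\|_{C^0(\Omega'')}+\|F\|_{C^{0,\beta}(\Omega'')}\bigr).
\]
Chaining the three steps yields \eqref{Holder}. Here $\beta$ and $C$ depend on $\Omega$, $\Omega'$, $\alpha$, the geometry of $g_\lambda$ on $\Omega$, $n$ and $\tau$. Crucially, they do not depend on $\sigma$, since $\Omega$ is fixed and lies inside $M_\sigma$ for all small $\sigma$.

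The Neumann boundary plays no role: the estimate is purely interior because $\Omega'\subset\subset\Omega$. If one wants $\Omega$ to touch $\partial M_\sigma$, one would instead use the boundary Schauder estimate for the Neumann problem; the statement, however, only requires compact containment.

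The main obstacle I expect is honesty about the nonlinear term. The estimate is linear in $\|u\|_{C^0}$ only because $u$ is trapped between the barriers $1\pm\tau$. So I would make explicit that the constant $C$ absorbs the dependence on $\tau$, which is harmless since $\tau$ is fixed before $\sigma\to 0$. The exponent $\beta$ is taken as the minimum of $\alpha$ and the Morrey exponent, which accounts for $\beta\in(0,\alpha]$.
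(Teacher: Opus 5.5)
Your proof is correct. It has the same two-step architecture as the paper's: an interior H\"older bound for $u_{\lambda,\sigma}$, then the interior Schauder estimate, with $u^{\frac{n+2}{n-2}}$ controlled through the barriers $1\pm\tau$. Your first step, however, uses a different key estimate. The paper gets the initial $C^{0,\beta}$ bound from De Giorgi--Nash--Moser (\cite{GT} Theorem 8.24) and then applies \cite{GT} Corollary 6.3, whereas you use Calder\'on--Zygmund $W^{2,p}$ estimates plus Morrey's embedding.

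The De Giorgi--Nash--Moser bound depends only on the ellipticity ratio and on integral bounds for the lower-order terms. That fits the paper's later appeal to the uniform equivalence of $g_\lambda$ and $\bar g$ on $\mathcal{E}$. Its exponent is a priori small, though, which is why the paper needs a further Schauder pass ``if $\beta<\alpha$'' afterwards.

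Your route requires continuity of the leading coefficients, which is harmless here since $g_\lambda$ is smooth. In exchange, it already makes $u_{\lambda,\sigma}$ locally Lipschitz with a quantitative bound, so $F\in C^{0,\alpha}$ and you may take $\beta=\alpha$ outright rather than the minimum you propose. You are also more explicit than the paper that the linear dependence on $\|u_{\lambda,\sigma}\|_{C^0(\Omega)}$ rests on the barrier bounds, so $C$ depends on $\tau$.

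One small point: $\Omega\subset M_\sigma$ need not be bounded, because $M_\sigma$ contains the whole end. Before speaking of its compact closure, first shrink $\Omega$ to a bounded neighbourhood of $\overline{\Omega'}$; this only decreases the right-hand side.
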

\begin{proof} 
Let $\Omega' \subset\subset \hat \Omega \subset\subset \Omega$, then on $\Omega$, by \cite{GT} Theorem 8.24, there exists $\beta(\Omega,\Omega')$ such that, 
$$||u_{\l,\sigma}||_{C^{0,\b}(\hat \Omega)}\leq C(\hat \Omega, \Omega)\left( ||u_{\l,\sigma}||_{L^2(\Omega)}+||u_{\l,\sigma}^{\frac{n+2}{n-2}}||_{L^2(\Omega)}+||\chi_{\l}(R_g+n(n-1))||_{L^2(\Omega)}\right).$$ Without loss of generality, $\beta\leq \alpha$.

While on $\hat \Omega$, we can obtain by \cite{GT} Corollary 6.3, 
$$||u_{\l,\sigma}||_{C^{2,\b}(\Omega')}\leq C(\Omega',\hat \Omega)\left( ||u_{\l,\sigma}||_{C^{0}(\hat\Omega)} + ||u_{\l,\sigma}^{\frac{n+2}{n-2}}||_{C^{0,\b}(\hat \Omega)} + ||\chi_{\l}(R_g+n(n-1))||_{C^{0}(\hat\Omega)}  \right).$$
The result thus follows.  
\end{proof} 

By Proposition \ref{DNM} and Proposition \ref{bound}, we see that there exists a subsequence $\{u_{\l,\sigma_k}\}$ such that $u_{\l,\sigma_k}\to u_{\l}$ as $\s_k \to 0$, where $u_{\l}\in C^{2}_{loc}(M)$ satisfying \eqref{DGS}.  While on $\mE$,  note that $g_{\l}$ and $\bar g=g_{\HH}$ are uniformly equivalent independent of $\l$. Thus, by \eqref{Holder} there exists $\beta=\beta(\mE)>0$ such that 
$$||u_{\lambda}||_{C^{2,\b}(\mE)}$$ is bounded independent of $\l$.  If $\b < \a$, one can carry out interior Schauder estimate leveraging the regularity of $g$ to conclude that $||u_{\lambda}||_{C^{2,\a}(\mE)}$ is bounded independent of $\l$.

On the other hand, $1-\tau \leq u_{\lambda,\sigma_k} \leq 1+\tau$ for all $k\in \mathbb{N}$, we have that $u_\lambda$ is uniformly positive on $M$ and the conformal metric $\tilde g_{\lambda} \coloneqq u_\lambda^{\frac{4}{n-2}}g_\lambda$ is well-defined. 

\

For closeness and asymptotics, we also consider the equation satisfied by $v_{\l}:=u_{\l}-1$, 
\begin{equation}\label{closeness}
\begin{split}
-a_n\Lp_{\l}v_{\l}+nv_{\l}+(R_{\l}+n(n-1))v_{\l}=& \chi_{\l}(R_g+n(n-1))-(R_{\l}+n(n-1))\\
&-n(n-1)\left((1+v_{\l})^{\frac{n+2}{n-2}}-1-\frac{n+2}{n-2}v_{\l} \right). 
\end{split}
\end{equation}

Then as in \cite{DGS} Appendix B (cf. \cite{G} Section 3, \cite{Lee}),  $v_{\lambda}\in C^{2,\a}_{-n}(\mE)$.  Moreover,  for $\delta'\in (\frac{n}{2}, \delta)$, $||\tilde g_{\lambda}-g||_{C^{2,\a}_{\delta'}(\mE)}\to 0$ as $\lambda\to \infty$.  Let $K\subset M\setminus \mE$, what remains is to check smallness of $v_{\lambda}$ on $K$.  Note that the analysis is no different even if $K$ is ``far" from $\mE$. As pointed out after Proposition \ref{bound},  the $C^0$ bound is global on $M$ and we can pick $\lambda$ large such that $||v_{\l}||_{C^0(M)}$ is small and thus  $C^{2,\a}$ smallness follows as in the classical setting.   

Since, $v_{\lambda}\in C^{2,\a}_{-n}(\mE)$, then one can follow \cite[Theorem 5.2]{DS15} to apply a change of radial coordinate to get Wang's asymptotics while preserving the energy-momentum vector.  Finally, $|\m(\tilde g_{\lambda}) -\m(g)|$ can be made arbitrarily small as $\lambda \to \infty$ following from the closeness of metrics by standard computation.   

\section{Handling possibly unbounded singular set}\label{Singular}
Here we discuss some sufficient conditions to allow the design of \cite{BW1} of blow up functions to be applied to unbounded singular sets as of independent interest.  For the convenience of readers, the notations here are from \cite{BW1}.  Also see the treatment in \cite{HKLZ} Theorem 5.2. 

One of the keys of \cite{BW1} is the Minkowski dimension bound on the singular set by \cite{CN} for minimisers of area-related functionals.  However, if $\S$ is an almost minimising current (almost monotonicity and bounded mean curvature) with multiplicity one (see \cite{GS} for counterexamples), then one can still use \cite{NV} Theorem 1.6 to conclude the same Minkowski dimension (actually Minkowski content) bound on the singular set.  Moreover,  note that the coefficient depends on the area of the surface and ambient geometry.  

In particular,  if the singular set $\mathcal{S}$ is unbounded.  Then, one can decompose the portion containing the singular set into components which have a uniform area bound.  Let $A>0$,  $\S_\MS:=\bigcup_{i} \bar \S_i$, where $|\S_i|\leq A$ and $\bar \S_i \cap \bar \S_j=\emptyset$ if $|i-j|>1$, $\bar \S_i \cap \bar \S_{i+1}=\partial \S_{i}\cap\partial \S_{i+1}$.  

If $|\Na \log \hat \rho|$ and $|\Phi|$ are uniformly bounded and $Q$ is uniformly strictly away from 0 on $\S_{\MS}$ and a $\delta$-neighbourhood of $\S_{\MS}$ in $M$ has bounded geometry,  then we can conclude the following.  

For each $\MS_i:=\MS \cap \bar \S_i$, one can construct $\Psi_i$ and at the end, we would define $\Psi:= \sum_{i} \Psi_i$. Note the following for clarifying certain adjustment.  
\begin{enumerate} 
\item\label{u1} For Lemma 3.32 in \cite{BW1},  $C_0, C_1$ is independent of $i$.  Thus, so is $C$ in its statement and the choice of $t_0$ can be made independent of $i$. 
\item\label{u2} $C(q)$ on P.32 Theorem 3.33 in \cite{BW1} is independent of $i$ from the discussion above of uniform area bound on $|\S_i|$.  
\item\label{ul0} For \cite{BW1} Proposition 3.35,  since the support of $\Psi_i$ and $\Psi_j$ may have overlap if $|i-j|=1$.  (Without loss of generality $\sqrt{t_*}<<1$.)  But due to items $\ref{u1}$ and $\ref{u2}$ above.  We can still choose $l_0$ independent of $i$.  
\item\label{uc0c1} \cite{BW1} Proposition 3.34 and Corollary 3.36 is linear in nature.  Hence, they hold for $\Psi$ and moreover by item (\ref{ul0}) we know that ``sufficiently small" is uniform along $\S$.  Thus, the choice of $\eps_0$ is still valid as on \cite{BW1} P.35.  
\item By items (\ref{u1}), (\ref{u2}) and (\ref{uc0c1}), for \cite{BW1} Proposition 3.40, $c_0$ and $c_1$ is uniform along $\MS$ and hence it is still valid. 
\end{enumerate}

\begin{rem}
Here, the focus is that by \cite{NV}, the uniform area bound allows the constants to be uniformly chosen for the exact same construction of each function.  While in \cite{HKLZ} Section 5 Theorem 5.2 applies scaling to each function to allow the choice of a constant. 
\end{rem}

\textbf{Acknowledgements}: The author is grateful to Anna Skorobogatova, Paul Minter and Kai-Hsiang Wang for discussions on geometric measure theory.


\begin{thebibliography}{40} 
\bibitem{AD}
L. Andersson and M. Dahl, {\em Scalar curvature rigidity for asymptotically locally hyperbolic manifolds} Ann. Global Anal.Geom. \textbf{16} (1998), 1–27. 

%\bibitem{AMS05}
%L. Andersson, M. Mars, and W. Simon, {\em Local existence of dynamical and trapping horizons}, Physical review letters \textbf{95} (2005), no. 11, 111102.

\bibitem{ACG} L. Andersson, M. Cai, and G.J. Galloway, {\em Rigidity and positivity of mass
for asymptotically hyperbolic manifolds}, Ann. Henri Poincaré (2008) no. 9, 1–33. 

\bibitem{AM}
L. Andersson, and J. Metzger, {\em The area of horizons and the trapped region}, Communications in Mathematical Physics \textbf{290} (2009), no. 3, 941-972.

%\bibitem{AMS08}
%L. Andersson, M. Mars, and W. Simon, {\em Stability of marginally outer trapped surfaces and existence of marginally outer trapped tubes}, Adv. Theor. Math. Phys. \textbf{12} (2008), no.4, 853-888. 

\bibitem{ADM}
R. Arnowitt, S. Deser, and C. Misner, {\em Energy and the criteria for radiation in general relativity}, Physical Review \textbf{118} (1960), no. 4, 1100.

\bibitem{BC}
R. Bartnik and P.Chruściel, {\em Boundary value problems for dirac type
equations}, J. reine angew. Math. \textbf{579} (2005), 13–73. 

\bibitem{BHHSZ} Y. Bi, T. Hao, S. He, Y. Shi and J. Zhu. {\em A proof for the Riemannian positive mass theorem up to dimension 19}, arXiv preprint, arXiv:2603.02769 (2026). 

\bibitem{BW1} S. Brendle,  and Y. Wang.  {\em A dimension descent scheme for the positive mass theorem in arbitrary dimension}, arXiv preprint, arXiv:2604.08473 (2026). 

\bibitem{BW2} S. Brendle,  and Y. Wang.  {\em On the spacetime positive energy theorem in arbitrary dimension}, arXiv preprint, arXiv:2604.18561 (2026). 

\bibitem{CLZ}
S. Cecchini, M. Lesourd, and R. Zeidler, {\em Positive mass theorems for spin initial data sets with arbitrary ends and dominant energy shields}, International Mathematics Research Notices 2024, no. 9 (2024), 7870-7890.

\bibitem{CZ}
S. Cecchini, and R. Zeidler, {\em The positive mass theorem and distance estimates in the spin setting}, Transactions of the American Mathematical Society \textbf {377} (2024), no. 08, 5271-5288.

\bibitem{CW}
X. Chai, and X. Wan, {\em The mass of an asymptotically hyperbolic end and distance estimates}, Journal of Mathematical Physics \textbf{63} (2022), no. 12.

\bibitem{CN}
J. Cheeger and A. Naber, {\em Quantitative stratification and the regularity of harmonic maps and minimal currents}, Comm. Pure Appl. Math.  \textbf{66}, 965–990 (2013)

\bibitem{CLSZ}
J. Chen, P. Liu, Y. Shi, and J. Zhu, {\em Incompressible hypersurface, positive scalar curvature and positive mass theorem}, Mathematische Annalen (2025): 1-80. 

%\bibitem{CL20}
%O. Chodosh, and C. Li, {\em Generalized soap bubbles and the topology of manifolds with positive scalar curvature}, arXiv:2008.11888 (2020).

\bibitem{CD09} P.T. Chruściel and E. Delay, {\em Gluing constructions for asymptotically hyperbolic manifolds with constant scalar curvature}, Communications in Analysis and Geometry \textbf{17} (2009) no.2, 343-381.  

\bibitem{CD15} P.T. Chruściel and E. Delay, {\em Exotic hyperbolic gluings}, Journal of Differential Geometry \textbf{108} (2018) no.2, 243–293.  

\bibitem{CD19} P.T. Chruściel and E. Delay, {\em The hyperbolic positive energy theorem}, arXiv preprint, arXiv:1901.05263 (2019),  to appear in JEMS.  

\bibitem{CGNP18} P.T. Chruściel, G.J. Galloway, L. Nguyen, and T.-T. Paetz, {\em On the mass
aspect function and positive energy theorems for asymptotically hyperbolic
manifolds}, Class. Quantum Grav. (2018), in press, arXiv:1801.03442. 

\bibitem{CG21} P.T. Chruściel and G.J. Galloway {\em Positive mass theorems for asymptotically hyperbolic Riemannian manifolds with boundary} Class. Quantum Grav.  \textbf{38}  (2021) 237001.  

\bibitem{CH03}
P.T. Chruściel, and M. Herzlich, {\em The mass of asymptotically hyperbolic Riemannian manifolds}, Pacific journal of mathematics \textbf{212} (2003), no. 2, 231-264.

\bibitem{DGS} M. Dahl, R. Gicquaud, and A. Sakovich, {\em Asymptotically hyperbolic manifolds with small mass}, Communications in Mathematical Physics \textbf{325} (2014), no. 2, 757-801.

\bibitem{DS15} M. Dahl and A. Sakovich, {\em A density theorem for asymptotically hyperbolic initial data satisfying the dominant energy condition}, arXiv preprint, arXiv:1502.07487 (2015). 

\bibitem{E10} M. Eichmair, {\em Existence, regularity, and properties of generalized apparent horizons}, Comm. Math. Phys. \textbf{294} (2010), no. 3, 745–760. 

\bibitem{E3}
M. Eichmair, {\em The Jang equation reduction of the spacetime positive energy theorem in dimensions less than eight}, Communications in Mathematical Physics \textbf{319} (2013), no. 3, 575-593.

%\bibitem{EGM20} M. Eichmair, G.J. Galloway, and A. Mendes, {\em Initial data rigidity results}, Comm. Math. Phys. (2020), Online First; arXiv:2009.09527. 

\bibitem{EHLS}
M. Eichmair, L.-H. Huang, D. Lee, and R. Schoen, {\em The spacetime positive mass theorem in dimensions less than eight}, J. Eur. Math. Soc. (JEMS) \textbf{18} (2016), no. 1, 83–121.

\bibitem{FCS80}
D. Fischer‐Colbrie, and R. Schoen, {\em The structure of complete stable minimal surfaces in 3‐manifolds of non‐negative scalar curvature}, Communications on Pure and Applied Mathematics \textbf{33} (1980), no. 2, 199-211. 

\bibitem{GL}
G. J. Galloway, and D. Lee, {\em A note on the positive mass theorem with boundary},  Letters in Mathematical Physics \textbf{111} (2021), no. 4, 111.

%\bibitem{GS06} G. J. Galloway, and R. Schoen, {\em A generalization of Hawking’s black hole topology theorem to higher dimensions}, Comm. Math. Phys. \textbf{266} (2006), no. 2, 571–576.

\bibitem{GallowayTsang} 
G. J. Galloway, and T.-Y. Tsang, {\em Positive mass theorems for manifolds with ALH toroidal ends}, arXiv preprint, arXiv:2602.08789 (2026).  

\bibitem{G}
R. Gicquaud, {\em De l'équation de prescription de courbure scalaire aux équations de contrainte en relativité générale sur une variété asymptotiquement hyperbolique}, Journal de mathématiques pures et appliquées \textbf{94} (2010), no. 2, 200-227.

\bibitem{GT}
D. Gilbarg, and N. S. Trudinger, {\em Elliptic partial differential equations of second order}, Vol. 224, no. 2. Berlin: springer, 1977. 

\bibitem{GS}
M. Goering, and A. Skorobogatova, {\em Flat interior singularities for area almost-minimizing currents}, arXiv preprint, arXiv:2309.09634 (2023).

\bibitem{HJ} 
L.-H. Huang, and H.-C. Jang, {\em Scalar curvature deformation and mass rigidity for ALH manifolds with boundary}, Transactions of the American Mathematical Society \textbf{375} (2022), no. 11, 8151-8191.

\bibitem{HJM} L.-H. Huang, H.C. Jang, and D. Martin. {\em Mass rigidity for hyperbolic manifolds}, Communications in mathematical physics \textbf{376} (2020), no. 3, 2329-2349. 

\bibitem{HKLZ} S. Hirsch, M. Khuri, M. Lesourd, and Y. Zhang, {\em The Hyperboloidal and Spacetime Positive Mass Theorem in All Dimensions}, arXiv preprint, arXiv:2604.24746 (2026). 

\bibitem{HSY} S. He, Y. Shi, H. Yu. {\em Singularity removal rigidity theorems for minimal hypersurfaces in manifolds with nonnegative scalar curvature}, arXiv preprint, arXiv:2602.23705 (2026). 

\bibitem{Jang} P. S. Jang, {\em On the positivity of energy in general relativity}, Journal of Mathematical Physics \textbf{19} (1978), no. 5, 1152-1155.

\bibitem{LLU22} D. Lee, M. Lesourd, and R. Unger, {\em Density and positive mass theorems for incomplete manifolds}, Calculus of Variations and Partial Differential Equations \textbf{62} (2023), no. 7, 194. 

\bibitem{LLU21} D. Lee, M. Lesourd, and R. Unger, {\em Density and positive mass theorems for initial data sets with boundary}, Comm. Math. Phys., https://link.springer.com/article/10.1007/s00220-022-04439-1

\bibitem{Lee}
J. M. Lee, {\em Fredholm operators and Einstein metrics on conformally compact manifolds}, Vol. 13. American Mathematical Soc., 2006.

\bibitem{LUY21} M. Lesourd, R. Unger, and S-T. Yau,  {\em The Positive Mass Theorem with Arbitrary Ends},  to appear in Journal. Diff. Geom.; arXiv:2103.02744 (2021). 

\bibitem{L}
D. Lundberg, {\em On Jang's equation and the Positive Mass Theorem for asymptotically hyperbolic initial data sets with dimensions above three and below eight}, arXiv preprint, arXiv:2309.11330 (2023).

\bibitem{MO}
M. Min-Oo, {\em Scalar curvature rigidity of asymptotically hyperbolic spin manifolds},
Math. Ann. \textbf{285} (1989), 527–539. 

\bibitem{NV}
A. Naber, and D. Valtorta, {\em The singular structure and regularity of stationary and minimizing varifolds}, J. Eur. Math. Soc.  \textbf{22} (2020), no. 10, pp. 3305–3382. 

\bibitem{PT}
T. Parker, and C. H. Taubes, {\em On Witten's proof of the positive energy theorem}, Communications in Mathematical Physics \textbf{84} (1982), no. 2, 223-238.

\bibitem{S}
A. Sakovich, {\em The Jang equation and the positive mass theorem in the asymptotically hyperbolic setting}, Communications in Mathematical Physics \textbf{386} (2021), no. 2, 903-973.

\bibitem{SY1}
R. Schoen, and S.-T. Yau, {\em On the proof of the positive mass conjecture in general relativity}, Communications in Mathematical Physics \textbf{65} (1979), no. 1, 45-76.

\bibitem{SY6}
R. Schoen, Richard, and S.-T. Yau, {\em The energy and the linear momentum of space-times in general relativity}, Communications in Mathematical Physics \textbf{79} (1981), no. 1, 47-51.

\bibitem{SY2}
R. Schoen, and S.-T. Yau, {\em Proof of the positive mass theorem. II}, Communications in Mathematical Physics \textbf{79} (1981), no .2, 231-260.

\bibitem{SY4}
R. Schoen, and S.-T. Yau, {\em Conformally flat manifolds, Kleinian groups and scalar curvature}, Inventiones mathematicae \textbf{92} (1988), no. 1, 47-71. 

\bibitem{SY5}
R. Schoen, and S.-T. Yau, {\em Lectures on Differential Geometry}, International Press (1994). 

\bibitem{SY7}
R. Schoen, and S.-T. Yau, {\em Positive scalar curvature and minimal hypersurface singularities}, Surv. Differ. Geom. vol. 24, 441–480, International Press, Boston, MA,
(2022).

\bibitem{XW}
X. Wang, {\em The mass of asymptotically hyperbolic manifolds}, Journal of Differential Geometry \textbf{57} (2001), no. 2, 273-299. 

\bibitem{W}
E. Witten, {\em A new proof of the positive energy theorem}, Communications in Mathematical Physics \textbf{80} (1981), no. 3, 381-402. 

\bibitem{Zhu}
J. Zhu, {\em Positive mass theorem with arbitrary ends and its application},  International Mathematics Research Notices 2023, no. 11 (2023), 9880-9900.

\end{thebibliography}
\end{document}